\newtheorem{theorem}{Theorem}[section]
\newtheorem{lemma}[theorem]{Lemma}
\theoremstyle{definition}
\newtheorem{definition}[theorem]{Definition}
\theoremstyle{remark}
\numberwithin{equation}{section}
\providecommand{\keywords}[1]
{
  \small	
  \textbf{\textit{Keywords:}} #1
}
\begin{document}

\title[OPTIMAL CONTROL OF THE 2D CONSTRAINED NAVIER-STOKES EQUATIONS]
{OPTIMAL CONTROL OF THE 2D CONSTRAINED NAVIER-STOKES EQUATIONS}

\author{SANGRAM SATPATHI}
\email{sangramsatpathi0921@gmail.com}

\address{School of Mathematics, Indian Institute of Science Education and Research Thiruvananthapuram, Maruthamala, Thiruvananthapuram, Kerala, 695551, India.}

\subjclass{{Primary:35Q30}}

\begin{abstract}
   We study the 2D Navier–Stokes equations within the framework of a constraint that ensures energy conservation throughout the solution. By employing the Galerkin approximation method, we demonstrate the existence and uniqueness of a global solution for the constrained Navier–Stokes equation on the torus $\mathbb{T}^2$. Moreover, we investigate the linearized system associated with the 2D-constrained Navier-Stokes equations, exploring its existence and uniqueness. Subsequently, we establish the Lipschitz continuity and Fréchet differentiability properties of the solution mapping. Finally, employing the formal Lagrange method, we prove the first-order necessary optimality conditions. 
\end{abstract}

\keywords{Navier-Stokes equations; Constrained energy; Periodic boundary; Galerkin approximation; Solution mapping; Optimal control}

\maketitle

\section{Introduction}
Incompressible Navier-Stokes equations are used to understand the dynamics of an incompressible viscous fluid. These equations were proposed by C. Navier in 1822 and were later derived by G. Stokes. By solving these equations, we can predict how the fluid's speed changes over time and in different places, based on the initial and boundary states. These equations have many practical uses, from studying aerodynamics to modeling blood flow in the body but the basic mathematical
question of the existence of a unique global-in-time solution to these parabolic PDEs on a bounded
domain in $\mathbb{R}^3$
still remains open due to the non-linear convective term. The existence of a unique global-in-time solution to the Navier-Stokes equations on $\mathbb{R}^2$ has
been known for a long time. Ladyzhenskaya \cite{Ladyzhenskaia1959SolutionT} proved an inequality to control the non-linear term in a bounded domain in $\mathbb{R}^2$ which was later used to prove the existence and uniqueness of the solution to Navier-Stokes equations. The study of 2D-constrained Navier-Stokes equations adds another factor to consider, such as a restriction on the energy of the solution known as $L^2$-energy. The reason why we study this constrained problem is that these equations are expected to provide a better approximation to the incompressible Euler equations. This is because, for the Euler equations, the energy of solutions (which are smooth enough) remains constant. The study conducted in \cite{article} considered two-dimensional Navier-Stokes equations as in the Caglioti et al. \cite{art},associated with the same energy constraint
as in Caffarelli et al. \cite{Caffarelli2008NonlocalHF} and Rybka \cite{rybka_2006}. To be specific, they considered the
Navier-Stokes equations projected on the tangent space of the manifold M, where
\begin{align*}
    \mathrm{M}=\{ u\in H(\mathbb{T}^2) : |u|_H^2 =1\}.
\end{align*}
Here $H$ is the space of square-integrable, divergence-free, mean zero vector fields
on a torus $\mathbb{T}^2$.They examined the following form 
\begin{align*}
    \frac{d u(t)}{dt}+[\nu A u(t)+B(u(t))]=0.
\end{align*}
The authors have shown that if the initial data belongs to the space  ${V} \cap \mathrm{M}$ then the solution of the above equation $u(t) $ stays on the manifold $\mathrm{M}$ for all time $t$.
In this paper, we consider the Navier-Stokes equations of the form 
\begin{align*}
\begin{cases}
    \frac{\partial{u(x,t)}}{\partial{t}}-\nu\Delta u(x,t)+(u(x,t)\cdot\nabla)u(x,t)+\nabla p(x,t)=f(u(x,t))\\
\nabla.u(x,t)=0.\\
u(x,0)=u_0(x),
\end{cases}
\end{align*}
subject to the same constraint as in \cite{rybka_2006}\cite{Caffarelli2008NonlocalHF}\cite{article}\cite{artic}. we prove the existence of the solution only on a torus by the Galerkin approximation method. Our proof does not hold in $\mathbb{R}^2$.
\par We are interested in the problem 
$$
\frac{d u(t)}{dt}+[A u(t)+B(u(t))]={f}(u(t)), \quad t \geq 0,\;\;u(0)=u_0.
$$
where $u\in{H}$. Similar to the approach in \cite{article}, we project the aforementioned equation onto the tangent space of $\mathrm{M}$, resulting in the following.
\begin{align*}{\label{eq:11}}
    \begin{cases}
        \frac{du}{dt}+[{A}u +B(u)] =|\nabla u|^{2} u +{f},\\
        u(0)=u_0.
    \end{cases}
\end{align*}
In \cite{phdthesis}, the author focuses on investigating optimal control problems related to the non-stationary Navier-Stokes equations. He introduced a study on solution mapping and presented some valuable results of it for the unsteady Navier-Stokes equations. In this paper, we will prove those results for the 2D-constrained Navier-Stokes equations. We added a control term to the right-hand side of the above equation.
We linearized the system and investigated the existence and uniqueness of its solution. We also analyze several significant properties of the solution mapping. These results will have a crucial role in studying the control of 2D-constrained Navier-Stokes equations. We employ the formal Lagrange method \cite{tröltzsch2010optimal} to establish the first-order necessary optimality conditions. The optimization problem is defined as follows:
 $$\min J(y,U)$$
 subject to the state equation 
 \begin{align*}
      y_{t}+A y+B(y)-|\nabla y|^{2} y={U}\\
      y(0)=y_0
 \end{align*}
 $U\in U_{ad}$.\\
 Where \[
J(y,U):=\frac{1}{2} \int_{0}^{T}|A^{1 / 2} y(t)|_{H}^{2} dt+\frac{1}{2} \int_{0}^{T}|U(t)|_{V}^{2} dt
\]
and 
 $$U_{ad}:=\{ U \in T_u \mathrm{M} : |U|_{V} \text{ is bounded}\}.$$
 In this context, $U$ represents the control variable and $y$ represents the solution of the state equation. In section (6), we introduce the Lagrange functional and examine its directional derivative in relation to both the control and state. Ultimately, we conclude the section by demonstrating the necessary optimality condition.
\section{Constrained Navier-Stokes equation}
\subsection{General notations}
Let $\Omega$ be a bounded domain in $\mathbb{R}^2$, $\mathbb{R}^2$, or $\mathbb{T}^2$. For $b \in [1,\infty]$ and $k \in \mathbb{N}$, we denote the Sobolev space and Lebesgue spaces of $\mathbb{R}^2$ by $W^{k,p}(\Omega,\mathbb{R}^2)$ (or $W^{k,p}$) and $L^p(\Omega,\mathbb{R}^2)$ (or $L^p$), respectively. Additionally, we define ${H}^2$ as $W^{k,2}$.
Let $\mathbb{T}^2$ represent the bounded periodic domain, which can be visualized as a two-dimensional torus. Now, we will introduce the following spaces:
\begin{align*}
    \mathcal{L}_0^2 &= \{u\in L^2(\mathbb{T}^2,\mathbb{R}^2) : \int_{\mathbb{T}^2} u(x)\, dx = 0\}, \\
    {H} &= \{u \in \mathcal{L}_0^2 : \nabla \cdot u = 0\}, \\
    {V}  &= {H}^1 \cap {H}.
\end{align*}
The scalar product and norm of ${H}$ can be represented as the $L^2$ scalar product and $L^2$ norm, respectively, denoted by:
\begin{align*}
    \langle u, v \rangle_{{H}} \quad \text{or} \quad \langle u, v \rangle \quad \text{and} \quad |u|_{{H}} \quad \text{or} \quad |u|.
\end{align*}
Moreover, the scalar product and norm of ${V} $ are also referred to as the ${H}^1$ scalar product and norm, respectively.\\Let us defined the Stokes operator and discuss some important things about it. We represent the Stokes operator as ${A}: D({A}) \rightarrow {H}$, where ${A}$ maps from the domain $D({A})$ to the Hilbert space ${H}$. The Stokes operator is defined as follows:
\begin{align*}
{A}u := -\Delta u,
\end{align*}
 The domain $D({A})$ of the Stokes operator is defined as the intersection of the Hilbert space ${H}$ and the Sobolev space ${{H}^2(\mathbb{T}^2)}$, denoted as:
\begin{align*}
D({A}) = {H} \cap {{H}^2(\mathbb{T}^2)}=E.
\end{align*}
Since $ \langle {A}u\;,\;u\rangle=(|\nabla u|)^2\;\;\text{for}\; u\in D({A})$, so the Stokes operator is non-negative operator. The stokes operator is also a self-adjoint operator.

\subsection{Operators and their properties}
From now onwards we identify our domain as a two-dimensional torus $\mathbb{T}^2$.
We can introduce a continuous trilinear map $b: L^p \times W^{1,q} \times L^r \rightarrow \mathbb{R}$ defined as follows:

\begin{align*}
b(u,v,w) = \sum_{i,j=1}^2 \int_{\Omega} u^i \frac{\partial v^j}{\partial x^i} w^j  dx,
\end{align*}
\\where $ \;p,q,r \in [1,\infty]$
such that $\frac{1}{p}+\frac{1}{q}+\frac{1}{r}\leq1$.
\par Let $ B:{V} \times {V} \rightarrow \mathrm{V'}$ be the bilinear map such that,
 \begin{align*}
     \langle B(u,v)\;,\;\phi\rangle=b(u,v,\phi),\text{for} \;u,v,\phi\in V.
 \end{align*}
 When considering $u\in{{V}}$, $v\in E$, and $w \in {{H}}$, we can establish the following inequality:

\begin{align*}
    |b(u,v,\phi)| \le \sqrt{2}|u|_{{H}}^{\frac{1}{2}}|u|_{{V}}^{\frac{1}{2}}|v|_{{V}}^{\frac{1}{2}}|v|_{E}^{\frac{1}{2}}|w|_{{H}}.
\end{align*}

Hence we can uniquely extend the trilinear map $b$ to operate on the triple\\ ${V}\times E\times{H}$.

Furthermore, the map $B$ can be extended uniquely to a bounded operator denoted as:

\begin{align*}
    B: {V}\times E \rightarrow {H}.
\end{align*}

The properties of the tri-linear map and bilinear map are the following:
\begin{align*}
    b(u,u,u)&=0 ,\;u\in {V}.\\
   b(u,w,w)&=0,\;u\in {V},w\in H^1.\\
   \langle B(u,u)\;,\;{A}u\rangle_{H}&=0,\;u\in D(A).
\end{align*}
The proof of the above results can be found in \cite{temam1977navier}.
\begin{lemma}
    Let $\mathcal{Q}: {V}  \rightarrow {H}$ be defined by
    $$\mathcal{Q} (u):=|\nabla u|^{2} u, \quad u \in {V} .
$$
Then there exists $C>0$ such that for $u_{1}, u_{2} \in {V} $,
$$
\left|\mathcal{Q} \left(u_{1}\right)-\mathcal{Q} \left(u_{2}\right)\right|_{{H}} \leq C\left|u_{1}-u_{2}\right|_{{V} }\left(\left|u_{1}\right|_{{V} }+\left|u_{2}\right|_{{V} }\right)^{2}
$$
\end{lemma}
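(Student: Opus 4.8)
The plan is to exploit the fact that, in the definition of $\mathcal{Q}$, the quantity $|\nabla u|^2$ is the \emph{scalar} $\langle Au,u\rangle_{H}=|\nabla u|_{L^2}^2$ rather than a pointwise expression; in particular $|\nabla u|^2\le|u|_{V}^2$. Thus $\mathcal{Q}(u)=|\nabla u|^2\,u$ is merely a scalar multiple of $u$, and the map $V\to H$ is indeed well defined (were $|\nabla u|^2$ read pointwise, controlling $\bigl||\nabla u|^2u\bigr|_{H}$ would require $u\in E$ rather than just $u\in V$, which confirms that the scalar reading is the intended one). With this observation the estimate reduces to an elementary telescoping argument, and the cubic homogeneity of $\mathcal{Q}$ already matches the right-hand side, since a quantity linear in $u_1-u_2$ times a factor quadratic in the arguments has total degree three.

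Concretely, I would add and subtract $|\nabla u_1|^2 u_2$ to write
\begin{align*}
\mathcal{Q}(u_1)-\mathcal{Q}(u_2)
=|\nabla u_1|^2\,(u_1-u_2)+\bigl(|\nabla u_1|^2-|\nabla u_2|^2\bigr)\,u_2,
\end{align*}
and then take the $H$-norm, so that the triangle inequality yields
\begin{align*}
|\mathcal{Q}(u_1)-\mathcal{Q}(u_2)|_{H}
\le |\nabla u_1|^2\,|u_1-u_2|_{H}
+\bigl|\,|\nabla u_1|^2-|\nabla u_2|^2\,\bigr|\,|u_2|_{H}.
\end{align*}
For the first term I bound $|\nabla u_1|^2\le|u_1|_{V}^2$ and use the continuous embedding $V\hookrightarrow H$ (Poincaré on $\mathbb{T}^2$ for mean-zero fields) in the form $|u_1-u_2|_{H}\le C|u_1-u_2|_{V}$. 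For the second term I factor the difference of squares,
\begin{align*}
\bigl|\,|\nabla u_1|^2-|\nabla u_2|^2\,\bigr|
=\bigl|\,|\nabla u_1|_{L^2}-|\nabla u_2|_{L^2}\,\bigr|\,\bigl(|\nabla u_1|_{L^2}+|\nabla u_2|_{L^2}\bigr)
\le|u_1-u_2|_{V}\bigl(|u_1|_{V}+|u_2|_{V}\bigr),
\end{align*}
where the reverse triangle inequality controls the first factor, and again apply $|u_2|_{H}\le C|u_2|_{V}$.

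Combining the two bounds and using $|u_1|_{V}^2\le(|u_1|_{V}+|u_2|_{V})^2$ together with $(|u_1|_{V}+|u_2|_{V})\,|u_2|_{V}\le(|u_1|_{V}+|u_2|_{V})^2$ gives
\begin{align*}
|\mathcal{Q}(u_1)-\mathcal{Q}(u_2)|_{H}
\le 2C\,|u_1-u_2|_{V}\bigl(|u_1|_{V}+|u_2|_{V}\bigr)^2,
\end{align*}
which is the claim with constant $2C$. I do not anticipate a genuine obstacle here; the only point requiring care is the interpretation of $|\nabla u|^2$ as a scalar, after which every step is an application of the triangle and reverse-triangle inequalities together with the $V\hookrightarrow H$ embedding. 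If one takes the full $H^1$ norm for $|\cdot|_{V}$, the embedding constant is $1$ and the result holds with $C=2$.
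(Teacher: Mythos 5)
Your proposal is correct and follows essentially the same route as the paper: the same telescoping decomposition via adding and subtracting $|\nabla u_1|^2 u_2$, the reverse triangle inequality on the difference of the scalar quantities $|\nabla u_i|^2$, and the embedding $V\hookrightarrow H$ to close the estimate. Your explicit remark that $|\nabla u|^2$ must be read as the scalar $|\nabla u|_{L^2}^2$ is a useful clarification that the paper leaves implicit, but the argument itself is the same.
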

\begin{proof}
    $$
\begin{aligned}
\left|\mathcal{Q} \left(u_{1}\right)-\mathcal{Q} \left(u_{2}\right)\right|_{{H}} &= \left|\left|\nabla u_{1}\right|^{2} u_{1}-\left|\nabla u_{2}\right|^{2} u_{2}\right|_{{H}}\\
& =\left.|| \nabla u_{1}\right|^{2} u_{1}-\left|\nabla u_{1}\right|^{2} u_{2}+\left|\nabla u_{1}\right|^{2} u_{2}-\left.\left|\nabla u_{2}\right|^{2} u_{2}\right|_{{H}} \\
& =\left|\left|\nabla u_{1}\right|^{2}\left(u_{1}-u_{2}\right)+\left(\left|\nabla u_{1}\right|^{2}-\left|\nabla u_{2}\right|^{2}\right) u_{2}\right|_{{H}} \\
& \leq\left|\nabla u_{1}\right|^{2}\left|u_{1}-u_{2}\right|_{{H}}+\left(\left|\nabla u_{1}\right|+\left|\nabla u_{2}\right|\right)\left|\left|\nabla u_{1}\right|-\left|\nabla u_{2}\right|\right|\left|u_{2}\right|_{{H}}\\
& \leq C\left[\left|\nabla u_{1}\right|^{2}\left|u_{1}-u_{2}\right|_ {V} +\left(\left|\nabla u_{1}\right|+\left|\nabla u_{2}\right|\right)\left|\nabla\left(u_{1}-u_{2}\right)\right|\left|u_{2}\right|_ {V} \right] \\
& \leq C\left|u_{1}-u_{2}\right|_ {V} \left[\left|u_{1}\right|_{{V} }^{2}+\left|u_{2}\right|_{{V} }^{2}+\left|u_{1}\right|_ {V} \left|u_{2}\right|_ {V} \right] \\
& \leq C\left|u_{1}-u_{2}\right|_ {V} \left(\left|u_{1}\right|_ {V} +\left|u_{2}\right|_ {V} \right)^{2} .
\end{aligned}
$$
Here we have used the fact that ${V} $ is continuously embedded in ${H}$.
\end{proof}
\subsection{The deterministic model}
The 2D Navier-Stokes equations are given as follows:
\begin{align}
\begin{cases}
    \frac{\partial{u(x,t)}}{\partial{t}}-\nu\Delta u(x,t)+(u(x,t).\nabla)u(x,t)+\nabla p(x,t)=f(u(x,t)).\\
\nabla.u(x,t)=0.\\
u(x,0)=u_0(x).
\end{cases}
\end{align}
Here, we consider the domain $\mathcal{O}$ and time interval $[0, T]$ for all $T > 0$. The variables $x \in \mathcal{O}$ and $t \in [0, T]$ represent spatial coordinates and time, respectively.

In this context, $u: \mathcal{O} \rightarrow \mathbb{R}^2$ denotes the velocity field, while $p: \mathcal{O} \rightarrow \mathbb{R}$ represents the pressure field of the fluid.
By employing the conventional approach of applying the projection map to the aforementioned problem, we attain the following form,
$$
\frac{d u(t)}{dt}+[A u(t)+B(u(t))]={f}(u(t)), \quad t \geq 0,\;\;u(0)=u_0.
$$
Let us represent the set of divergence-free $\mathbb{R}^2$-valued functions with unit $L^2$ norm as follows:
\begin{align*}
    \mathrm{M}=\{u\in{H}:|u|_{L^2}=1\}.
\end{align*}
The tangent space of it is defined as:

\begin{align*}
    T_u\mathrm{M}  = \{v\in{H} : \langle v, u \rangle_{H} = 0\}, u\in \mathrm{M}.
\end{align*}
We define an orthogonal projection map $\pi_u:{H}\rightarrow T_u\mathrm{M}$ by,
\begin{align*}
    \pi_u(v)=v-\langle v\;,\;u\rangle_{H}.
\end{align*}
Several assumptions will be made about the function $f$, it is globally Lipschitz, has a linear growth, belongs to the tangent space of the manifold $\mathrm{M} $ and $f(u(t))\in L^2 (0, T; V), t\in [0,T]$.
\par Let $F(u) = {A}u + B(u,u)-f(u)$ be a function, and $\mathcal{F}(u)$ be the projection of $F(u)$ onto the tangent space $T_u\mathrm{M} $. Then,

$$
\begin{aligned}
\mathcal{F} (u) & =\pi_{u}(F(u)) \\
& =F(u)-\langle F(u), u\rangle_{H} u \\
& =A u+B(u)-f(u)-\langle A u+B(u)-{f}(u), u\rangle_{{H}} u \\
& =A u-|\nabla u|_{H}^{2} u+B(u)-{f}(u) .
\end{aligned}
$$
Hence, by projecting the equation onto the tangent space $T_{u} \mathrm{M} $, we derive the following constrained Navier-Stokes equations.
\begin{align}{\label{eq:1}}
    \begin{cases}
        \frac{du}{dt}+[{A}u +B(u)] =|\nabla u|^{2} u +{f},\\
        u(0)=u_0.
    \end{cases}
\end{align}
\section{Existence and uniqueness}
The proof of the existence of  the solution of ({\ref{eq:1}})  is based on the Galerkin approximation method.
\par Let $\left\{e_{i}\right\}_{i=0}^{\infty}$ be the orthonormal basis in ${H}$ composed of eigen vectors of ${A}$ corresponding to the eigen values $\left\{\lambda_{i}\right\}_{i=0}^{\infty}$ . Where ${A}$ is a positive self-adjoint operator.
\begin{align*}
    {A}e_{i}=\lambda_i e_{i}.
\end{align*}
Let ${H}_n$ be the subspace of ${H}$ equipped with the norm inherited from ${H}$.
$$
{H}_{n}:=\operatorname{Linspan}\left\{e_{1}, \ldots, e_{n}\right\}.
$$
$P_n$ be the projection operator on ${H}$ defined by
\begin{align*}
P_{n} u = \sum_{i=1}^n\langle u, e_i\rangle_{{H}} e_{i},\quad u \in {{H}}.
\end{align*}
Utilizing the notations established above, we can examine the Galerkin approximation of the constrained Navier-Stokes equations in the ${H}_{n}$ space:  
\begin{align}{\label{eq:2}}
\begin{cases}
& \frac{d u_{n}}{d t}=-\left[P_{n} {A} u_{n}+P_{n} B (u_{n})\right]+\left|\nabla u_{n}\right|^{2} u_{n}+P_n f\left(u_{n}\right). \\
& u_{n}(0)= P_{n} u_{0}.
\end{cases}
\end{align}
First, we will show that the solution will stay inside the sphere $\mathrm{M}$, that is $|u_n|_{{H}}^2\le 1$.
\begin{lemma}
    Let $u_0\in{V} \cap\mathrm{M}$,  then $|u_n|_{{H}}^2\le 1$, where $u_n$ is the solution of ({\ref{eq:2}}).
\end{lemma}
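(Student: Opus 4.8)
The plan is to run the standard energy estimate: test the Galerkin equation \eqref{eq:2} against $u_n$ itself and track the evolution of the scalar quantity $\phi(t) := |u_n(t)|_{H}^2$. Taking the $H$-inner product of both sides of \eqref{eq:2} with $u_n$ gives $\tfrac12\,\phi'(t) = \langle \tfrac{du_n}{dt}, u_n\rangle_{H}$, so I would expand the right-hand side term by term and show that almost everything cancels.

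First I would simplify the projected terms. Since $u_n \in {H}_n$ and $P_n$ is the orthogonal (hence self-adjoint) projection onto ${H}_n$, we have $\langle P_n w, u_n\rangle_{H} = \langle w, u_n\rangle_{H}$ for every $w \in {H}$. Applying this with $w = A u_n$ and using the identity $\langle A u, u\rangle_{H} = |\nabla u|^2$ recorded in Section 2 yields $\langle P_n A u_n, u_n\rangle_{H} = |\nabla u_n|^2$. For the convective term, the same reduction together with the property $b(u,u,u)=0$ gives $\langle P_n B(u_n), u_n\rangle_{H} = b(u_n,u_n,u_n)=0$. For the forcing term, the assumption that $f(u)$ lies in the tangent space $T_u\mathrm{M}$, i.e.\ $\langle f(u), u\rangle_{H}=0$, makes $\langle P_n f(u_n), u_n\rangle_{H}$ vanish. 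The one surviving contribution is $|\nabla u_n|^2\,\langle u_n, u_n\rangle_{H} = |\nabla u_n|^2\,\phi(t)$. Collecting everything, I expect the clean identity
\[
\phi'(t) = 2\,|\nabla u_n(t)|^2\bigl(\phi(t)-1\bigr).
\]

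The final step is to integrate this linear ODE for $\phi$. Writing $\psi := \phi - 1$ and $g(t) := 2\,|\nabla u_n(t)|^2 \ge 0$, the identity becomes $\psi'(t) = g(t)\,\psi(t)$, whose solution is $\psi(t) = \psi(0)\exp\!\bigl(\int_0^t g(s)\,ds\bigr)$. Since the exponential factor is positive, the sign of $\psi(t)$ is fixed by the sign of $\psi(0) = |u_n(0)|_{H}^2 - 1 = |P_n u_0|_{H}^2 - 1$. Because $u_0 \in \mathrm{M}$ means $|u_0|_{H}=1$ and $P_n$ is an orthogonal projection, we have $|P_n u_0|_{H} \le |u_0|_{H} = 1$, so $\psi(0)\le 0$ and hence $\psi(t)\le 0$ for all $t$, which is exactly $|u_n(t)|_{H}^2 \le 1$.

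The computation is mostly bookkeeping, so the main obstacle is conceptual rather than technical: ensuring that each of the three ``vanishing'' terms genuinely vanishes. The identity $b(u_n,u_n,u_n)=0$ is standard, but the forcing term requires reading the hypothesis ``$f$ belongs to the tangent space of $\mathrm{M}$'' as the pointwise orthogonality $\langle f(u), u\rangle_{H}=0$ for all relevant $u$ (not merely for $u\in\mathrm{M}$), since the Galerkin iterates are not a priori on the sphere; I would make this interpretation explicit. A secondary point worth noting is that $g$ need only be locally integrable for the exponential-factor argument to run, which is guaranteed on the interval where the smooth finite-dimensional solution $u_n$ exists.
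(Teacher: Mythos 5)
Your proof is correct and follows essentially the same route as the paper: test \eqref{eq:2} against $u_n$, use $b(u_n,u_n,u_n)=0$, $\langle Au_n,u_n\rangle=|\nabla u_n|^2$, and the tangency of $f$ to obtain $\frac{d}{dt}\bigl(|u_n|_H^2-1\bigr)=2|\nabla u_n|^2\bigl(|u_n|_H^2-1\bigr)$, then integrate and use $|P_nu_0|_H\le 1$. Your explicit remark that the hypothesis on $f$ must be read as $\langle f(u),u\rangle_H=0$ even off the sphere $\mathrm{M}$ is a point the paper leaves implicit, but the argument is otherwise identical.
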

\begin{proof}
    \begin{align*}
 &\frac{1}{2}\frac{d}{dt}|u_n(t)|_{{H}}^2= \Big\langle -P_n{A}u_n(t) - P_n B(u_n(t)) + |\nabla u_n|^2u_n + P_n f(u_n)\; ,\; u_n\Big\rangle_H\\
 &\Rightarrow\frac{1}{2} d\left|u_{n}(t)\right|_{{H}}^{2}=-\left|u_{n}(t)\right|_{{V} }^{2} d t+\left|\nabla u_{n}(t)\right|^{2}\left|u_{n}(t)\right|_{{H}}^{2} d t
 \\
 &\Rightarrow d\left[\left|u_{n}(t)\right|_{{H}}^{2}-1\right]=2\left|u_{n}(t)\right|_{{V} }^{2}\left[\left|u_{n}(t)\right|_{{H}}^{2}-1\right] d t .
    \end{align*}
    Integrating both sides from $0$ to $t$, we get,
    $$
     \left|u_{n}(t)\right|_{{H}}^{2}-1=\left[\left|u_{n}(0)\right|_{{H}}^{2}-1\right] \exp \left[2 \int_{0}^{t}\left|u_{n}(s)\right|_{{V} }^{2} d s\right]
    $$
    Since $|u_n(0)|_{{H}}=|P_nu_0|_{{H}}\le |u_0|_{{H}}=1\text{ and } \int_{0}^{t}\left|u_{n}(s)\right|_{{V} }^{2} d s<\infty$, we get
    $$
    |u_n(t)|_{{H}}^2\le 1\;\;\;\forall \;t\;<\infty$$.
\end{proof}
\subsection{Passage to the limit} We will obtain \textit{a priori} estimates independent of $n$ for the functions $u_n$ and then pass the limit.
\par By taking the inner product of Equation (\ref{eq:2}) with ${A}u_n$, we obtain the following expression,
\begin{align}{\label{eq:3}}
\begin{cases}
\left\langle\frac{d u_{n}}{d t}, {A} u_{n}\right\rangle_{H}= -\left\langle A u_{n}, A u_{n}\right\rangle _H-\left\langle P_n B (u_{n}), A u_{n}\right\rangle_{H} +& \langle |\nabla u_n|^2 u_n, A u_n\rangle_H \\&+ \langle P_n f(u_n),u_n \rangle_H.
\end{cases}
\end{align}
Because the Stokes operator and the projection operator $P_n$ are self-adjoint,the function $f(u_n)\in L^2(0,T; V)$ and using $\langle B(u_n),{A}u_n\rangle_{{H}}=0$ ,we have the following,
\begin{align*}
    \frac{1}{2} \frac{d}{dt}\left|u_{n}\right|_{V}^{2}
    &= -\left\langle A u_{n} - \left|\nabla u_{n}\right|^{2} u_{n}, A u_{n} - \left|\nabla u_{n}\right|^{2} u_{n}\right\rangle - \left\langle A u_{n} - \left|\nabla u_{n}\right|^{2} u_{n}, \left|\nabla u_{n}\right|^{2} u_{n}\right\rangle \\
    &\quad + \left\langle f\left(u_{n}\right), u_{n}\right\rangle_{V} \\
    &= -\left|A u_{n} - \left|\nabla u_{n}\right|^{2} u_{n}\right|^{2} - \left\langle A u_{n} - \left|\nabla u_{n}\right|^{2} u_{n}, \left|\nabla u_{n}\right|^{2} u_{n}\right\rangle + \left\langle f\left(u_{n}\right), u_{n}\right\rangle_{V}. 
\end{align*}
$\text{Since } \left| A u_{n} - \left| \nabla u_{n} \right|^{2} u_{n} \right|_{H}^{2} \geq 0$, we can neglect this term in the previous equation, allowing us to express it as follows,
\begin{align*}
        \frac{1}{2} \frac{d}{dt}\left|u_{n}\right|_{V}^{2}
    \leq - \langle Au_n-\left|\nabla u_{n}\right|^{2} u_{n}, \left|\nabla u_{n}\right|^{2} u_{n}\rangle + \left\langle f\left(u_{n}\right), u_{n}\right\rangle_{V}.
\end{align*}
Now consider the term, 
\begin{align*}
    &\left\langle A u_{n}-\left|\nabla u_{n}\right|^{2} u_{n},\left|\nabla u_{n}\right|^{2} u_{n}\right\rangle \\
 &=\left\langle A u_{n},\left|\nabla u_{n}\right|^{2} u_{n}\right\rangle-\left\langle\left|\nabla u_{n}\right|^{2} u_{n},\left|\nabla u_{n}\right|^{2} u_{n}\right\rangle\\
 &=|\nabla u_n|^2 \langle Au_n , u_n\rangle -|\nabla u_n|^4|u_n|^2\\
 &\le |\nabla u_n|^4-|\nabla u_n|^4=0. 
\end{align*}
Since $|u_n|^2\leq 1$, the above calculation is valid. Hence using this estimation we have,
\begin{align*}
    \frac{1}{2} \frac{d}{d t}\left|u_{n}\right|_{V}^{2} \leq\left\langle f\left(u_{n}\right), u_{n}\right\rangle_{V}.
\end{align*}
Taking the integration from $0$ to $t,0<t\le T,$ we have,
\begin{align*}
    \left|u_{n}(t)\right|_{V}^{2}-\left|u_{n}(0)\right|_{V}^{2} \leq  2\int_{0}^{t}\left\langle f\left(u_{n}(s)\right), u_{n}(s)\right\rangle_{V} ds.
\end{align*}
Using Young's Inequality we obtain for a given $\varepsilon$,
\begin{align*}
     \int_{0}^{t}\left\langle f\left(u_{n}\right), u_{n}\right\rangle_{V} &\leq \varepsilon\left|f\left(u_{n}\right)\right|_{L^{2}(0, t ; V)}^{2}+\frac{1}{4 \varepsilon}\left|u_{n}\right|_{L^{2}(0, t ; V)}^{2} \\ &\leq C_{1}+C_{2} \int_{0}^{t}\left|u_{n}\right|_{V}^{2} d s .
\end{align*}
Since $f$ has linear growth. Hence 
\begin{align*}
\left|u_{n}(t)\right|_{V}^{2} \leq C_{1}+C_{2} \int_{0}^{t}\left|u_{n}(s)\right|_{V}^{2} d s \\
\end{align*}
By applying Gronwall's inequality, we can have $u_n\in L^{\infty}(0, T; V)$ for all $n$.\\
Again consider (\ref{eq:3}),
\begin{align*}
\left\langle\frac{d u_{n}}{d t}, {A} u_{n}\right\rangle_{H}= -\left\langle A u_{n}, A u_{n}\right\rangle _H-\left\langle P_n B (u_{n}), A u_{n}\right\rangle_{H} &+ \langle |\nabla u_n|^2 u_n, A u_n\rangle_H \\&+ \langle P_n f(u_n),u_n \rangle_H\\
 \Rightarrow\frac{1}{2} \frac{d}{dt}\left|u_{n}\right|_{V}^{2}=-|A u_{n}|^{2}-|u_n|_{V}^4 +\left\langle f\left(u_{n}\right), u_{n}\right\rangle_{V}\\
 \Rightarrow\frac{1}{2} \frac{d}{dt}\left|u_{n}\right|_{V}^{2}+|A u_{n}|^{2}\leq +\left\langle f\left(u_{n}\right), u_{n}\right\rangle_{V}.
\end{align*}
Taking integration from $0$ to $ T <\infty$ we obtain,
\begin{align*}
 |Au_n|_{L^2(0,T;H)}^2\leq C_{1} +C_{2}\int_0^T |u_n|_{V}^2<\infty.
\end{align*}
The above term is finite because of $u_n\in L^{\infty}(0, T; V)$. So by the above estimation, we have $u_n\in L^2(0, T; D(A))$ for all $n$. Therefore there exists a subsequence of $u_n$, denoted again the same as $u_n$ such that, ${u_n}$ converges to $u_*$ in weak* topology of $L^{\infty}(0,T;V)$ and $u_n$ converges to $u$ weakly in $L^2(0,T;D(A))$.
\par Now we aim to demonstrate the equality of both limits, that is $u=u_*$. Hence by using definitions of weak and weak* convergence, we have,
 \begin{align}\label{eq:10}\forall\;\; v \in L^{1}\left(0, T ; V^{\prime}\right),\quad\int_{0}^{T}\Big\langle u_{n}-u_{*}, v\Big\rangle d t \rightarrow 0 \;\text{as} \;n \rightarrow \infty.\end{align}
 Again 
 \[ \int_{0}^{T}\Big\langle u_{n}-u, v\Big\rangle d t \rightarrow 0 \quad \forall\; v \in L^{2}\left(0, T ; D(A)^{\prime}\right).\]
 Now since $L^{2}\left(0, T ; V^{\prime}\right) \subset L^{1}\left(0, T ; V^{\prime}\right)$ and $\forall\; v \in L^{1}\left(0, T ; V^{\prime}\right) \Rightarrow \forall\; v \in L^{2}\left(0, T ; V^{\prime}\right), $
 therefore from (\ref{eq:10}),
  \noindent\[\int_{0}^{T}\langle u_{n}-u_{*}, v\rangle d t \rightarrow 0\quad\forall\; v \in {L}^{2}\left(0,T;V^{\prime}\right).\]
  Considering the inclusion $D(A) \subset V$, we have $V^{\prime} \subset (D(A))^{\prime}$. Consequently, we can infer that $L^{2}\left(0, T ; V^{\prime}\right) \subset L^{2}\left(0, T ;(D(A))^{\prime}\right)$. Hence, we can conclude that: \begin{align*}
  \begin{cases}
      &\int_{0}^{T}\langle u_{n}-u_{*}, v\rangle \, dt \rightarrow 0, \quad \forall\, v \in L^{2}\left(0, T ;(D(A))^{\prime}\right), \\
      &\int_{0}^{T}\langle u_{n}-u, v\rangle , dt \rightarrow 0,\; \forall\, v \in L^{2}\left(0, T ;(D(A))^{\prime}\right).
  \end{cases}
\end{align*}

Hence we get $u=u_*$.\\
The following results can be found in p-183 of \cite{temam1977navier}.\\
\textbf{A compactness theorem in Banach spaces.} Let $X_{0}, X, X_{1}$, be three Banach spaces such that
$$
X_{0} \subset X \subset X_{1},
$$

where the injections are continuous and

$$
\begin{aligned}
& X_{i} \text { is reflexive, } i=0,1, \\
& \text { the injection } X_{0} \rightarrow X \text { is compact. }
\end{aligned}
$$

Let $T>0$ be a fixed finite number and $\alpha_0$ and $\alpha_1$ are two finite numbers such that $\alpha_i>0$ for $i=0,1$. Consider the space 

$$
\begin{aligned}
& \mathcal{Y}=\mathcal{Y}\left(0, T ; \alpha_{0}, \alpha_{1} ; X_{0}, X_{1}\right) \\
& \mathcal{Y}=\left\{v \in L^{\alpha_{0}}\left(0, T ; X_{0}\right), \quad v^{\prime}=\frac{d v}{d t} \in L^{\alpha_{1}}\left(0, T ; X_{1}\right)\right\}
\end{aligned}
$$
It is obvious that
$$
\mathcal{Y} \subset L^{\alpha_{0}}(0, T ; X)
$$
With a continuous injection.
\begin{theorem}\label{thm:A2} Under the above assumptions the injection of $\mathcal{Y}$ into $L^{\alpha_0}(0, T ; X)$ is compact.
\end{theorem}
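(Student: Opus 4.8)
The plan is to characterize relative compactness in the Bochner space $L^{\alpha_0}(0,T;X)$ by the vector-valued Fréchet–Kolmogorov (Simon) criterion and to verify its two hypotheses for a bounded subset $\mathcal{F}\subset\mathcal{Y}$. Recall that a bounded family $\mathcal{F}\subset L^{\alpha_0}(0,T;X)$ is relatively compact precisely when (i) for every $0<t_1<t_2<T$ the set of time-integrals $\{\int_{t_1}^{t_2} f(t)\,dt : f\in\mathcal{F}\}$ is relatively compact in $X$, and (ii) the time-translates are uniformly small, i.e. $\sup_{f\in\mathcal{F}}\|\tau_h f-f\|_{L^{\alpha_0}(0,T-h;X)}\to 0$ as $h\to 0^+$, where $(\tau_h f)(t)=f(t+h)$. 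Thus, after fixing a set $\mathcal{F}$ bounded in $\mathcal{Y}$ (so bounded in $L^{\alpha_0}(0,T;X_0)$ with derivatives bounded in $L^{\alpha_1}(0,T;X_1)$), it suffices to establish (i) and (ii).

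The single place where the compactness hypothesis $X_0\hookrightarrow X$ enters is an interpolation (Ehrling-type) inequality, which I would prove first: for every $\eta>0$ there is $C_\eta>0$ such that $\|v\|_X\le \eta\|v\|_{X_0}+C_\eta\|v\|_{X_1}$ for all $v\in X_0$. This follows by contradiction: if it failed for some $\eta$, one obtains a sequence $v_n$ with $\|v_n\|_X=1$, $\|v_n\|_{X_0}$ bounded and $\|v_n\|_{X_1}\to 0$; compactness of $X_0\hookrightarrow X$ yields a subsequence converging in $X$ to some $v$, while continuity of $X\hookrightarrow X_1$ forces the $X_1$-limit to be $0$, hence $v=0$, contradicting $\|v\|_X=1$.

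For (i), Hölder's inequality gives $\|\int_{t_1}^{t_2} f\,dt\|_{X_0}\le (t_2-t_1)^{1-1/\alpha_0}\|f\|_{L^{\alpha_0}(0,T;X_0)}$, so these integrals lie in a bounded subset of $X_0$, which is relatively compact in $X$ by the compact embedding. For (ii), I would use that every element of $\mathcal{Y}$ has an $X_1$-valued absolutely continuous representative with $f(t+h)-f(t)=\int_t^{t+h} f'(s)\,ds$; estimating in $X_1$ and applying Hölder yields $\|\tau_h f-f\|_{L^{\alpha_0}(0,T-h;X_1)}\le C\,h^{\theta}\,\|f'\|_{L^{\alpha_1}(0,T;X_1)}$ for a suitable $\theta>0$, which tends to $0$ uniformly over $\mathcal{F}$. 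This is only $X_1$-smallness; to upgrade it to $X$-smallness I invoke the Ehrling inequality on the differences $\tau_h f-f$, which are bounded in $L^{\alpha_0}(0,T-h;X_0)$ (uniformly in $h$ and $f$) and small in $L^{\alpha_0}(0,T-h;X_1)$, so that $\limsup_{h\to0}\sup_{f\in\mathcal{F}}\|\tau_h f-f\|_{L^{\alpha_0}(0,T-h;X)}\le \eta\cdot(\text{bound})$ for every $\eta$, hence equals $0$. With (i) and (ii) in hand, Simon's criterion gives the asserted compactness.

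The main obstacle, and the step I would spend the most care on, is the translation estimate (ii) together with its Ehrling upgrade: one must justify that each $f\in\mathcal{Y}$ admits an $X_1$-valued absolutely continuous representative for which $f(t+h)-f(t)=\int_t^{t+h}f'(s)\,ds$ holds, track the exponents so that the gained power $\theta$ of $h$ is strictly positive, and treat the borderline cases $\alpha_0=1$ or $\alpha_1=1$ where the Hölder estimates degrade. An equivalent classical route, closer to the cited source, instead uses the stated reflexivity of $X_0,X_1$ to extract weak limits $u_n\rightharpoonup u$ in $L^{\alpha_0}(0,T;X_0)$ and $u_n'\rightharpoonup u'$ in $L^{\alpha_1}(0,T;X_1)$, reduces by Ehrling to proving $u_n\to u$ strongly in $L^{\alpha_0}(0,T;X_1)$, and closes that gap by the same translation estimate; the analytic core is identical. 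In either route the interpolation inequality is short but conceptual, being the sole consumer of the compact-embedding hypothesis.
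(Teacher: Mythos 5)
Your proposal is correct in substance, but note that the paper does not actually prove this statement: its ``proof'' is a bare citation to Theorem 2.1 of Temam, whose argument is the classical Lions one --- extract a sequence $u_n$ weakly convergent in $\mathcal{Y}$ (using reflexivity of $X_0$ and $X_1$), reduce via the Ehrling interpolation inequality to showing strong convergence in $L^{\alpha_0}(0,T;X_1)$, and obtain that from the integral representation $u_n(t)=u_n(s)+\int_s^t u_n'$ together with dominated convergence. Your primary route instead verifies the two hypotheses of the vector-valued Fr\'echet--Kolmogorov/Simon criterion for a bounded set in $\mathcal{Y}$; the analytic core (Ehrling plus the translation estimate $\|\tau_h f-f\|_{L^{\alpha_0}(0,T-h;X_1)}\lesssim h^{1-1/\alpha_1}\|f'\|_{L^{\alpha_1}(0,T;X_1)}$) is the same, and you correctly identify the Ehrling lemma as the sole consumer of the compact embedding. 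What the Simon route buys is generality --- it characterizes relatively compact \emph{sets} rather than sequences and does not use reflexivity of $X_0,X_1$ at all --- at the cost of importing a nontrivial characterization of compactness in Bochner spaces as a black box; the classical route is more self-contained given the stated hypotheses. Two small caveats: your exponent $\theta=1-1/\alpha_1$ is strictly positive only when $\alpha_1>1$, so the theorem as transcribed in the paper (which asserts only $\alpha_i>0$, evidently a typo for $\alpha_i>1$ as in Temam) would need the separate treatment of the borderline case that you flag; and in the paper's actual application ($\alpha_0=\alpha_1=2$, $X_0=D(A)$, $X=V$, $X_1=H$) none of these delicate cases arise, so either proof closes the gap the paper leaves open.
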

\begin{proof} 
See Theorem 2.1 \cite{temam1977navier}.
\end{proof}
We will use the above results to show the strong convergence. Now, considering the definitions:
\begin{align*}
&X_{0}=D(A)=H \cap H^{2}\left(\mathbb{T}^{2}\right),\\
&X =V= H\cap H^{1}\left(\mathbb{T}^2\right),\\
&X_{1} =H,
\end{align*}
we have the inclusion $X_{0} \subset X \subset X_1$, and the compact embedding $X_{0} \hookrightarrow X_{1}$.

Let us define the set:
\begin{align*}
    \mathbf{\mathcal{Y}}=\left\{v \in L^{2}(0, T ; D(A)) \mid v^{\prime} \in L^{2}(0, T ; H)\right\}.
\end{align*}

It follows that $\mathbf{\mathcal{Y}}\hookrightarrow L^2(0,T:V)$ is a compact embedding. Consequently, we can conclude that $u_n\rightarrow u $ strongly in $L^2(0,T;V)$.\\
 Hence we are allowed to pass the limit.
 To pass the limit, consider the following equation:
\begin{align*}
&\frac{d u_{n}}{d t}=-P_{n} A u_{n}-P_{n} B\left(u_{n}\right)+\mid \nabla u_{n}|^{2} u_{n}+P_{n}f\left(u_{n}\right).
\end{align*}
Let us consider a function $\Psi$ that is continuously differentiable and all the derivative is bounded and satisfies $\Psi(T) = 0$. Then,
   \begin{multline*}
       \int_{0}^{T}\Big\langle\frac{d u_{n}}{d t}, \Psi(t) e_{j}\Big\rangle_{H} d t =-\int_{0}^{T}\Big\langle   P_{n} A u_n(t) ,\Psi(t) e_{j}\Big\rangle _{H} d t\\ -
       \int_{0}^{T}\Big\langle  P_{n} B u_{n}(t), \Psi(t) e_{j}\Big\rangle_{H} d t\\
        +\int_{0}^{T}\Big\langle  \left|\nabla u_{n}(t)\right|^{2} u_n(t), \Psi(t) e_j\Big\rangle_{H} d t
+\int_{0}^{T}\Big\langle  P_{n} f\left(u_{n}(t)\right), \Psi(t) e_j\Big\rangle_{H} d t.
    \end{multline*}
To demonstrate the convergence term by term, let us first consider the following term:
$$
    \int_{0}^{T}\Big\langle\frac{d u_{n}}{d t}, \Psi(t) e_{j}\Big\rangle_{H} d t =-\int_{0}^{T}\Big\langle u_{n}(t), \Psi^{\prime}(t) e_{j}\Big\rangle_{H} d t -\Big\langle u_{n}(0), \Psi(0) e_{j}\Big\rangle_{H} .$$\\
    Hence we have,
    \begin{align*}
    -\int_{0}^{T}\Big\langle u_{n}(t), \Psi^{\prime}(t) e_j\Big\rangle_H d t &=\Big\langle  u_{n}(0), \Psi(0) e_{j}\Big\rangle_{H}-\int_{0}^{T}\Big\langle  P_n A u_{n}(t), \Psi(t) e_j\Big\rangle_H \\ &-\int_{0}^{T}\Big\langle  P _{n} B\left(u_{n}(t)\right), \Psi(t) e_j\Big\rangle_H d t
 \\&+\int_{0}^{T}\Big\langle  \left|\nabla u_{n}(t)\right|^{2} u_n(t), \Psi(t) e_j\Big\rangle_{H} d t 
 \\&+\int_{0}^{T}\Big\langle  P_{n} f\left(u_{n}(t)\right), \Psi(t) e_j\Big\rangle_{H} d t.
\end{align*}
To show
$$\int_{0}^{T}\Big\langle  u_{n}(t), \Psi^{\prime}(t) e_ j\Big\rangle_{H} dt \rightarrow -\int_{0}^{T}\Big\langle  u(t), \Psi^{\prime}(t) e_j\Big\rangle_{{H}} ,$$let us consider following:
\begin{align*}
\left|\int_{0}^{T}\Big\langle  u_{n}(t), \Psi^{\prime}(t) e_{j}\Big\rangle_{H}-\int_{0}^{T}\Big\langle  u_{n}(t), \Psi^{\prime}(t) e_{j}\Big\rangle_{H} \right| \leq \int_{0}^{T} \left|\Big\langle  u_{n}(t)-u(t), \Psi^{\prime}(t) e_j\Big\rangle_{H} \right|.
\end{align*}
By utilizing the Cauchy-Schwarz inequality, the aforementioned term can be expressed as follows:
\begin{equation*}
\begin{aligned}
\leq \int_{0}^{T}\big|u_{n}(t)-u(t)\big|_{H}\big| \Psi^{\prime}(t) e_j\big|_{H} \, dt
&\leq C\int_{0}^{T}\big|u_{n}(t)-u(t)\big|_V\big|\Psi^{\prime}(t) e_j\big|_{H} \, dt \\
&\leq \tilde{C}\big|u_{n}(t)-u(t)\big|_{L^2(0, T; 
 V)} \rightarrow 0 \text{ as } n \rightarrow \infty.
\end{aligned}
\end{equation*}
Again consider the term,
\begin{align*}
&\int_{0}^{T}\langle P_{n} B(u_{n}(t)), \Psi(t) e_j\rangle_{H} dt - \int_{0}^{T}\langle B(u(t)), \Psi(t) e_j\rangle_H dt \\
&\leq \int_{0}^{T}\left|\langle P_{n} B(u_{n}(t)) - B(u(t)), \Psi(t) e_j\rangle_{H}\right| dt \\
&\leq C \left(\int_{0}^{T}\left|P _{n} B(u_{n}(t)) - B(u(t))\right|_{H} dt\right) \\
&\leq C \left[\int_0^T |B(u_n(t)) - B(u(t))|_{H} dt + \int_0^T |P_n - I| |B(u(t))|_{H} dt\right] \rightarrow 0.
\end{align*}
In the above calculation, we utilized the fact that $P_n$ is a contraction and as $n\rightarrow \infty$, $P_n$ converges to the identity map $I$. Now let's consider the term below,
\begin{equation*}
\begin{aligned}
&\left|\int_{0}^{T}\Big\langle\left|\nabla u_{n}(t)\right|_{H}^{2} u_{n}(t), \Psi(t) e_{j}\Big\rangle_{H}-\int_{0}^{T}\Big\langle|\nabla u(t)|_H^{2} u(t) , \Psi(t) e_{j}\Big\rangle_{H } \right|\\&\quad\quad\quad\quad\quad\quad\quad\quad\quad\quad\quad\quad\quad
 \leq \int_{0}^{T}\left|\nabla u_n(t)|_{H}^{2} u_{n}(t)-|\nabla u(t)|_{H}^{2} u(t)\right|_{H}\left|\Psi(t) e_{j}\right|_{H} d t \\
&\quad\quad\quad\quad\quad\quad\quad\quad\quad\quad\quad\quad\quad\quad\quad\leq C \int_{0}^{T}\left|\left|\nabla u_{n}(t)\right|_{H}^{2} u_{n}(t)-\left|\nabla u_{n}(t)\right|_{H}^{2} u(t)\right|_{H} d t \\&
\quad\quad\quad\quad\quad\quad\quad\quad\quad\quad\quad\quad\quad\quad\quad\leq\tilde{C} \int_{0}^{T}\left|u_{n}-u\right|_{V} [\left|u_{n}\right|_{V}+\left| u \right|_{V}]^{2}. \\
 &\text { Since } u_{n} \rightarrow u \text{ in } L^{2}(0, T; V) \text { so }\left|u_{n}\right|_{V},|u|_{V}<\infty.
 \end{aligned}
 \end{equation*}
 Hence, the right-hand side of the above estimation tends toward zero. Now, let's consider the next term:
\begin{align*}
\bigg| \int_{0}^{T}\Big\langle P_{n} f\left(u_{n}\right), \Psi(t) e_j\Big\rangle_{H}  d t&-\int_{0}^{T}\Big\langle f(u) , \Psi(t) e_{j}\Big\rangle_{H} d t \bigg|\\
&\leq \int_{0}^{T}\left|\Big\langle P_{n} f\left(u_{n}\right)-f(u), \Psi(t) e_{j}\Big\rangle_{H}\right| dt\\ 
&\leq \int_{0}^{T}\left|{P_n} f\left(u_{n}\right)-f(u)\right|_{H}\left|\Psi(t) e_{j}\right|_{H} d t\\
&\leq C \int_{0}^{T}\left|f\left(u_{n}\right)-f(u)\right|_{H} d t+C \int_{0}^{T}\left|P_{n} f(u)-f(u)\right|_{H} d t \\
&\leq \tilde{C} \int_{0}^{T}\left|u_{n}-u\right|_{V}^{2} d t+\tilde{C}\int_{0}^{T}\left|P_{n}-I\right||f(u)|_H dt.
\end{align*}
Based on the previous arguments, we can show the right-hand side goes to zero of the above inequality. However, we still need to show that the $Au_n$ term converges.
$$
\begin{aligned}
\int_{0}^{T}\Big\langle A u_{n}-A u, \Psi(t)e_j\Big\rangle_{H} d t= & \int_{0}^{T}\Big\langle\left(u-u_{n}\Big\rangle, \Psi(t)e_j\right)_{H} d t \\
& \leq \int_{0}^{T}\Big\langle\nabla\left(u_{n}-u\right), \nabla \Psi(t)e_j\Big\rangle_{H} d t \\
& \leq C\int_0^T  | u_{n}-u|_{H}| \nabla \Psi(t)e_j |_{H} d t \\
& \leq C |u_n-u|_{L^2(0,T;V)}
\end{aligned}
$$
Since $u_n\rightarrow u$ in $L^2(0, T; V)$ hence we have the right-hand side of the above inequalty goes to zero. Therefore we have can pass the limit to the following equation,
$$
\begin{aligned}
-\int_{0}^{T}\Big\langle u(t), \Psi^{\prime}(t) e_{j}\Big\rangle_{H} d t  =&\Big\langle u(0), \Psi(0) e _j\Big\rangle-\int_{0}^{T}\Big\langle A u(t), \Psi(t) e_j\Big\rangle dt \\&-\int_{0}^{T}\Big\langle B u(t), \Psi(t) e_j\Big\rangle_{H} d t +\int_{0}^{T}\Big\langle| \nabla u(t) |_{H}^2 u, \Psi(t) e_j\Big\rangle_{H} d t \\&+\int_{0}^{T}\Big\langle f(u), \Psi(t) e_j\Big\rangle_H d t
\end{aligned}
$$
holds for all $e_j$. So it will hold for all $v=$ finite linear combinations of $e_j$ while passing the limit it is valid for all $v \in H$.

\noindent Finally, we need to show $u$ holds the equation,
$$
\begin{aligned}
& \frac{d u}{d t}=-A u-B(u) +|\nabla u|^{2} u+f(u) .\\
& u(0)=u_{0}.
\end{aligned}
$$
Multiply by $\Psi$ and continue by similar and then comparing we have $u$ satisfies the above equation.\\
Now for the uniqueness part consider the following, 
Let $u_1$ and $u_2$ are the solution of,\\
$$
\begin{cases}
    \frac{d u_1}{dt}=-A u_1-B(u_1)+|\nabla u_1|^{2} u_1+f(u_1). \\
 u_1(0)=u_{10}. 
 \end{cases}
 $$
$$
\begin{cases}
 \frac{d u_2}{dt}=-A u_2-B(u_2)+|\nabla u_2|^{2} u_2+f(u_2) .\\
 u_2(0)=u_{20}.
 \end{cases}
 $$
 $
\\
 \implies\; \frac{d u_{1}}{d t}-\frac{d u_{2}}{d t}=-A\left(u_{1}-u_{2}\right)-B\left(u_{1}\right)+B\left(u_{2}\right)  +\left|\nabla u_{1}\right|^{2} u_{1}-\left|\nabla u_{2}\right|^{2} u_{2} +f\left(u_{1}\right)-f\left(u_{2}\right) .\\
u_1(0)-u_2(0)=u_{10} -u_{20}.
\\
\implies u^{\prime}=-A u-B\left(u_{1}\right)+B\left(u_{2}\right)+\left|\nabla u_{1}\right|^{2} u_{1}-\left|\nabla u_{2}\right|^{2} u_{2} +f(u_1)-f(u_2).\\
 u(0)=u_{10} -u_{20}.\\
\\
\bigg[\text{Taking}\; u=u_1-u_2\bigg.]\\
$
\\
Taking inner product with $u$ in both sides we have,
\begin{align*}
    \left\langle u^{\prime}, u\right\rangle_{H}=&-<A u, u>_{H}-b\left( u, u_{2}, u\right) +<\left|\nabla u_{1}\right|^{2} u_{1}-\left|\nabla u_{2}\right|^{2} u_{2}, u>_{H}\\&+<f\left(u_{1}\right)-f\left(u_{2}\right), u>_H\\
 \Rightarrow \frac{1}{2} \frac{d}{d t}|u|_{H}^{2}&=-|\nabla u|_{H}^{2}-b\left(u, u_{2}, u\right) +\Big\langle\left|\nabla u_{1}\right|^{2} u_{1}-\left|\nabla u_{2}\right|^{2} u_{2}, u\Big\rangle_{H} \\&+\Big\langle f\left(a_{1}\right)-f\left(u_{2}\right), u\Big\rangle_H.
 \end{align*}
 Consider,\\
\begin{equation*}
\begin{aligned}
\Big\langle  \left|\nabla u_{1}\right|^{2} u_{1}-\left|\nabla u_{2}\right|^{2} u_{2} u\Big\rangle_H\leq\left|\left[\left|\nabla u_{1}\right|^{2} u_{1}-\left|\nabla u_{2}\right|^{2} u_{2}\right]\right|_{H}|u|_{H} \\
 \leq C\left|u_{1}-u_{2}\right|_{V}\left[\left|u_{1}\right|_{V}+\left|u_{2}\right|_{V}\right]^{2}|u|_{H} \\
=C|u|_{V}\left[\left|u_{1}\right|_{V}+\left|u_{2}\right|_{V}\right]^{2}|u|_{H}\\
\leq C \varepsilon|u|_{V}^{2}+\frac{C}{4 \varepsilon}|u|_{H}^{2}\left[\left|u_{1}\right|_{V}+\left|u_{2}\right|_{V}\right]^{4}.
\end{aligned}
\end{equation*}
\\Again we have,
 \begin{align*}
     & \left|\Big\langle f\left (u_{1}\right)-f\left(u_{2}\right), u_{1}-u_{2}\Big\rangle_{H}\right| \leq K|u|_{H}^{2}.\quad\text{[Since $f$ is Lipschitz.]}
 \end{align*}
 $\&$
 \begin{align*}
 \left|b\left(u, u_{2}, u\right)\right| &\leq \sqrt{2}|u|_{H}^{1 / 2}|u|_{V}^{1 / 2}\left|u_{2}\right|_{V}^{1 / 2}\left|u_{2}\right|_{E}^{1 / 2}|u|_H\quad\;\quad\quad\\
     &\leq \sqrt{2} C_{1}|u|_{H}|u|_{V}\left|u_{2}\right|_{V}^{1 / 2}
\left|u_{2}\right|_{E}^{1 / 2}\\
&=\sqrt{2} C_{1} \varepsilon|u|_{V}^{2}+\frac{\sqrt{2} C_{1}}{4 \varepsilon}|u|_{H}^{2}\left|u_{2}\right|_{V}
\left|u_{2}\right|_{E}.
\end{align*}
 Writing altogether we have,\\
\begin{align*}
     \frac{1}{2}\frac{d}{dt}|u(t)|_{H}^{2} \leq-|u|_{V}^{2}+\sqrt{2} C_{1} \varepsilon|u|_{V}^{2}+C \varepsilon|u|_{V}^{2}+\frac{C}{4 \varepsilon}|u|_{H}^{2}\left[\left|u_{1}\right|_{V}+\left|u_{2}\right|_{V}\right]^{4}  +K|u|_{H}^{2} \\+\frac{\sqrt{2}C_1}{4 \varepsilon}|u|_{H}^{2}|u_2|_V |u_2|_E.
\end{align*}
 $\text { Take } {C_2}=\max \left\{\sqrt{2} C_{1}, C, K\right\}$
 \begin{equation*}
\begin{aligned}
 \frac{1}{2}\frac{d}{dt}|u(t)|_{H}^{2}\leq -|u|_{V}^{2}+ C_{2} \varepsilon|u|_{V}^{2}+C_2 \varepsilon|u|_{V}^{2}+\frac{C_2}{4 \varepsilon}|u|_{H}^{2}\left[\left|u_{1}\right|_{V}+\left|u_{2}\right|_{V}\right]^{4}  +C_2|u|_{H}^{2} \\+\frac{C_2}{4 \varepsilon}|u|_{H}^{2}|u_2|_V |u_2|_E.
 \end{aligned}
 \end{equation*}
 Choose  $\varepsilon $ such that $(2 C_2 \varepsilon-1)<0$,
\text { So } $\varepsilon<\frac{1}{2C_2}$.
Therefore,
\begin{align*}
    \frac{d}{d t}|u|_{H}^{2} \leq {{C}} |u|_{H}^{2}.
\end{align*}
    $\text{Where}\; {C}=2[\;\;\frac{C_2}{4 \varepsilon}\left[\left|u_{1}\right|_{V}+\left|u_{2}\right|_{V}\right]^{4}  +\frac{C_2}{4 \varepsilon}|u_2|_V |u_2|_E\;+\;C_2\;\;]$.\\
So,
\begin{align*}
&\;\frac{d}{d t}|u(t)|_{H}^{2} \leq C|u(t)|_{H}^{2}\implies\frac{d}{d t}\left\{\exp \left(-\int_{0}^{t} {{C}} d s\right)|u(t)|_{H}^{2}\right\} \leq 0  \Rightarrow \quad|u(t)|_{H}^{2} \leq 0 \\
& \Rightarrow |u(t) |_{H}=0 \Rightarrow u_{1}(t)=u_{2}(t) \quad\forall\; t \in[0, T].
\end{align*}
Hence the solution is unique.\\
\section{Linearized equations}
We will need some of the results about the linearized equations.
Let $u$ be a solution of,
\begin{align*}
    &u_{t}+A u+B(u)-\mid \nabla u|_H^{2} u=U.\\
    & u(0)=u_0.\\
\end{align*}
Let $\bar{u}$ be the solution of  $A \bar{u}+B(\bar{u})-|\nabla \bar{u}|^{2} \bar{u}=0.$\\
$\text{Now}$ let $\omega=u-\bar{u}
\text { or } u=\omega+\bar{u}$.
So putting the value of $u$ in the first equation we have,
$$
\begin{aligned}
&(\omega+\bar{u})_{t}+A(\omega+\bar{u})+B(\bar{u}+\omega)-\left.| \nabla(\bar{u}+\omega)\right|^{2}(\bar{u}+\omega)=U.
\end{aligned}
$$
Now for equilibrium point $\bar{u}_{t}=0$.
So,\\
\begin{align}\label{eq:*}
    \omega_{t}+A \omega+A \bar{u}+B(\bar{u}+\omega)-\mid \nabla(\bar{u}+\omega) \mid ^2(\bar{u}+\omega) =U.
\end{align}

\noindent Here ,\\$B(\bar{u}+\omega)=(\bar{u}+\omega) \cdot \nabla)(\bar{u}+\omega)\\=(\bar{u} \cdot \nabla)(\bar{u}+\omega)+(\bar{\omega} \cdot\nabla)(\bar{u}+\omega)\\ 
=(\bar{u} \cdot \nabla) \bar{u}+{(u \cdot \nabla) \omega+(\omega \cdot \nabla) \bar{u}}{+(\omega+\nabla) \omega}.
$
\\Since we are linearizing so we can ignore the nonlinear term. Hence,

$$
\begin{aligned}
B(\bar{u}+\omega) =B(\bar{u})+(\bar{u} \cdot \nabla) \omega+(\omega \cdot \nabla) \bar{u} =B(\bar{u})+B^{\prime}(\bar{u}) \omega.
\end{aligned}
$$
Now from (\ref{eq:*})  we have , \\
$$
\begin{aligned}
& \begin{aligned}
\omega_{t}+A \omega+A \bar{u}+B(\bar{u})+B^{\prime}(\bar{u}) \omega  -|\nabla \bar{u}|^{2}(\bar{u}+\omega)
-|\nabla \omega|^{2}(\bar{u}+\omega) -2\langle\nabla \bar{u}, \nabla \omega\rangle(\bar{u}+\omega)
 =U.
\end{aligned} \\
& \text { Since } A \bar{u}+B(\bar{u})-|\nabla \bar{u}|^{2} \bar{u}=0,\;\text{and ignoring the nonlinear terms we have,}
\end{aligned}
$$
\begin{align*}
    \omega_{t}+A \omega+B^{\prime}(\bar{u}) \omega  -|\nabla \bar{u}|^{2}\omega -2\langle\nabla \bar{u}, \nabla \omega\rangle\bar{u}
 =U.
\end{align*}
Let us define a map,
\begin{align*}
&\Phi_{T} : {X_{T}} \longrightarrow L^2(0, T ; {H})\; \text{ by\;} \Phi_{ T}(\omega)(x, t)=G(\omega)(x, t).
\end{align*}
Where $X_T=C([0,T],{V} )\cap L^2(0, T; E).$\\
\noindent Then $\Phi_{T}$ is globally lipschitz.
To prove it let us consider, $ \omega_1,\omega_2 \in X_T $ and then
\begin{align*}
\left|\Phi_{T}\left(\omega_{1}\right)-\Phi_{T}\left(\omega_{2}\right)\right|_{L^2(0;T; {H})}&=\left|G\left(\omega_{1}\right)-G\left(\omega_{2}\right)\right|_{L^{2}(0, T ; {H})}\\
&=\left.\left|U-B^{\prime}(\bar{u}) \omega_{1}+\right| \nabla \bar{u}\right|_{{H}}^{2}  \omega_{1}+2\left\langle\nabla \bar{u}, \nabla \omega_{1}\right\rangle \bar{u} 
-U+B^{\prime}(\bar{u}) \omega_{2} 
\\&-|\nabla \bar{u}|_{{H}}^{2} \omega_{2} 
 -2\left\langle\nabla \bar{u}, \nabla \omega_{2}\right\rangle
 \bar{u}|_{L^2(0, T; {H})} \\
&=| B^{\prime}(\bar{u}) \omega_{2}-B^{\prime}(\bar{u}) \omega_{1}
 + 2\left\langle\nabla \bar{u}, \nabla \omega_{1}-\nabla \omega_{2}\right\rangle \bar{u}\\&+|\nabla \bar{u}|^2 \left(\omega_{1}-\omega_{2}\right)|_{L^2(0, T; {H})}\\&\le
\left[\int_{0}^{T}\left| | \nabla \bar{u} |_{{H}}^{2}\left[\omega_{1}-\omega_{2}\right]\right|_{H} ^{2} d t\right]^{1 / 2} 
\\&+\left[\int_{0}^{T}| B^{\prime}(\bar{u}) \omega_{2}-B^{\prime}(\bar{u}) \omega_{1}|_{{H}}^2\right]^{1/2}\ \\&+\left[\int_{0}^{T} \left|2\left\langle\nabla \bar{u}, \nabla \omega_{1}-\nabla \omega_{2}\right\rangle \bar{u}\right|_{{H}}^{2} d t\right]^{1 / 2}.
\end{align*}
Let us denote these 3 terms by $A_1,A_2,A_3$ respectively.\\
So,
\begin{equation*}
\begin{aligned}
A_{1}^{2}&=\left[\int_{0}^{T}\left| | \nabla \bar{u} |_{{H}}^{2}\left[\omega_{1}-\omega_{2}\right]\right|_{{H}} ^{2} d t\right] \le\int_{0}^{T}|\nabla \bar{u}|_{{H}}^{4}\left|\omega_{1}-\omega_{2}\right|_{{H}}^{2} d t \\
&=|\nabla \bar{u}|^{4} \int_{0}^{T}\left|\omega_{1}-\omega_{2}\right|_{{H}}^{2} d t. \le C_{1}|\nabla \bar{u}|_{{H}}^{4}\left|\omega_{1}-\omega_{2}\right|_{X_{T}}^{2}  \\
 A_{1} &\leq C_1|\nabla \bar{u}|_{{H}}^{2}\left|\omega_{1}-\omega_{2}\right|_{X_{T}}.
 \end{aligned}
 \end{equation*}
\noindent  Consider,
\begin{align*}
 A_{2}^{2}&=\int_{0}^{T}\left|B^{\prime}(\bar{u}) \omega_{1}-B^{\prime}(\bar{u}) \omega_{2}\right|_{{H}}^{2} d t \\
&=\int_{0}^{T} \mid(\bar{u} \cdot \nabla) \omega_{1}+\left(\omega_{1} \cdot \nabla\right) \bar{u}-(\bar{u} \cdot \nabla) \omega_{1}
 -\left.\left(\omega_{2} \cdot \nabla\right) \bar{u}\right|_{{H}} ^{2} d t \\
 &=\int_{0}^{T}\left|(\bar{u} \cdot \nabla)\left(\omega_{1}-\omega_{2}\right)+\left(\left(\omega_{1}-\omega_{2}\right) \cdot \nabla\right) \bar{u}\right|_{{H}}^{2} dt \\
 &\leq \int_{0}^{T}\left|(\bar{u}\cdot \nabla)\left(\omega_{1}-\omega_{2}\right)\right|_{{H}}^{2} d t+\int_{0}^{T}\left|\left(\omega_{1}-\omega_{2}\right) \cdot \nabla \bar{u}\right|_{{H}}^{2} d t \\
\implies &A_2\leq C_2 |\bar{u}|_E |\omega_1-\omega_2|_{X_T}.
\end{align*}
\noindent Again,
\begin{align*}
\;\;A_{3}^{2}= & \int_{0}^{T}\left|2\left\langle\nabla \bar{u}, \nabla \omega_{1}-\nabla \omega_{2}\right\rangle_{{H}} \bar{u}\right|_{{H}}^{2} d t \\
\leq & 4 \int_{0}^{T}|\bar{u}|_{{H}}^{2}|\nabla u|_{{H}}^{2}\left|\nabla\left(\omega_{1}-\omega_{2}\right)\right|_{{H}}^{2} d t \\
\le & 4|\bar{u}|_{{H}}^{2}|\nabla \bar{u}|_{{H}}^{2} \int_{0}^{T}\left|\nabla\left(\omega_{1}-\omega_{2}\right)\right|_{{H}}^{2} d t \\
\leq & 4|\bar{u}|_{{H}}^{2}|\nabla \bar{u}|_{{H}}^{2}\left|\omega_{1}-\omega_{2}\right|_{X_{T}}^{2} \\
\implies  A_{3} & \le 2 C_{3}|\bar{u}|_{{H}}|\nabla \bar{u}|_{{H}}\left|\omega_{1}-\omega_{2}\right|_ {X_{T}}.\\
\end{align*}
Hence,
\begin{align*}
    \left|\Phi_{T}\left(\omega_{1}\right)-\Phi_{T}\left(\omega_{2}\right)\right|_{L^2(0;T; {H})}\leq K |\omega_1-\omega_2|_{X_T}.\\
\end{align*}
$\text{Where}\; K=[\;2 C_{3}|\bar{u}|_{{H}}+C_2 |\bar{u}|_E+C_1|\nabla \bar{u}|_{{H}}^{2}\;]\;< \;\infty.$\\
Therefore $\Phi_T$ is Globally Lipschitz. Hence Theorem 1.9.1 of \cite{cartan1983differential} says that the Linearized system has a unique global solution. 
\section{The control-to-state mapping}
Now, we will take one step further towards achieving optimal control of the state equations. Our focus will be on studying \textit{control-to-state mapping}, which involves mapping the right-hand side of the equations to their corresponding solutions.
\begin{definition}{\textbf{(Solution mapping)}}
    Let $U\in L^2(0, T; V)$  denote the control. Consider the system {(\ref{eq:1})}. The mapping from the control variable $U$ to the corresponding weak solution $y$, where $y$ is the solution of equation (\ref{eq:1}) with the control right-hand side and a fixed initial value $y_0$, is denoted by $S$. In other words, we represent this mapping as $y = S(U)$.
\end{definition}
\textbf{Note:} We will use $C$ to represent the constant, and we often use the same symbol to represent other constants.
\subsection{Continuity and Differentiability}
\begin{lemma}
    The {\textit{control-to-state mapping}} is Lipschitz continuous from $L^2(0, T; V)$ to $L^2(0, T; D({A}))\cap L^{\infty}(0, T; V)$.
\end{lemma}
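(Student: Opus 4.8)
The plan is to mimic the uniqueness argument given above, but to test the difference equation against $Aw$ rather than against $w$, so as to control the two stronger norms $L^2(0,T;D(A))$ and $L^\infty(0,T;V)$ simultaneously. Fix controls $U_1,U_2\in L^2(0,T;V)$, write $y_i=S(U_i)$ for the corresponding solutions (both sharing the fixed initial value $y_0$), and set $w=y_1-y_2$. Subtracting the two state equations gives
\begin{align*}
w_t+Aw+\big(B(y_1)-B(y_2)\big)-\big(\mathcal{Q}(y_1)-\mathcal{Q}(y_2)\big)=U_1-U_2,\qquad w(0)=0,
\end{align*}
where $\mathcal{Q}(u)=|\nabla u|^2u$. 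Taking the $H$ inner product with $Aw$ and using $\langle w_t,Aw\rangle=\tfrac12\frac{d}{dt}|w|_V^2$ yields
\begin{align*}
\tfrac12\frac{d}{dt}|w|_V^2+|Aw|_H^2=-\langle B(y_1)-B(y_2),Aw\rangle+\langle \mathcal{Q}(y_1)-\mathcal{Q}(y_2),Aw\rangle+\langle U_1-U_2,Aw\rangle.
\end{align*}

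The next step is to bound each term on the right and absorb the resulting copies of $|Aw|_H^2$ into the left-hand side. For the source term I would use Cauchy--Schwarz with $|U_1-U_2|_H\le C|U_1-U_2|_V$ and Young's inequality to obtain $\varepsilon|Aw|_H^2+C_\varepsilon|U_1-U_2|_V^2$. For the cubic term the Lipschitz estimate for $\mathcal{Q}$ proved above gives $|\mathcal{Q}(y_1)-\mathcal{Q}(y_2)|_H\le C|w|_V(|y_1|_V+|y_2|_V)^2$, so Cauchy--Schwarz and Young produce $\varepsilon|Aw|_H^2+C_\varepsilon(|y_1|_V+|y_2|_V)^4|w|_V^2$. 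For the convective term I would split $B(y_1)-B(y_2)=B(y_1,w)+B(w,y_2)$ and estimate $b(y_1,w,Aw)$ and $b(w,y_2,Aw)$ separately using the trilinear inequality $|b(u,v,\phi)|\le\sqrt2|u|_H^{1/2}|u|_V^{1/2}|v|_V^{1/2}|v|_E^{1/2}|\phi|_H$ together with $|w|_E\le C|Aw|_H$. Each of these produces a factor $|Aw|_H^{3/2}$, so Young's inequality with exponents $4/3$ and $4$ again yields $\varepsilon|Aw|_H^2$ plus a lower-order term; the key outputs are coefficients $|y_1|_V^4$ and $|y_2|_V|y_2|_E$ multiplying $|w|_V^2$.

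Choosing $\varepsilon$ small enough to absorb all the $\varepsilon|Aw|_H^2$ terms leaves
\begin{align*}
\frac{d}{dt}|w|_V^2+|Aw|_H^2\le g(t)\,|w|_V^2+C|U_1-U_2|_V^2,
\end{align*}
with $g(t)=C\big(|y_1|_V^4+|y_2|_V^4+|y_2|_V|y_2|_E+1\big)$. The crucial observation is that $g\in L^1(0,T)$: the a priori bounds established in the existence proof give $y_i\in L^\infty(0,T;V)\cap L^2(0,T;E)$, so $|y_i|_V^4$ is bounded while $|y_2|_V|y_2|_E$ lies in $L^2(0,T)\subset L^1(0,T)$. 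Since $w(0)=0$, Gronwall's inequality applied to $\phi(t)=|w(t)|_V^2$ gives $\phi(t)\le C\exp(\|g\|_{L^1})\,|U_1-U_2|_{L^2(0,T;V)}^2$ uniformly in $t$, which is the $L^\infty(0,T;V)$ estimate. Integrating the differential inequality over $[0,T]$ and inserting this bound then controls $\int_0^T|Aw|_H^2\,dt$, giving the $L^2(0,T;D(A))$ estimate; adding the two yields the claimed Lipschitz bound.

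I expect the main obstacle to be the convective term, specifically $b(w,y_2,Aw)$: here the only available regularity of $y_2$ is $|y_2|_E\in L^2(0,T)$, so one must distribute the powers of $|Aw|_H$ so that the leftover Gronwall coefficient $|y_2|_V|y_2|_E$ is merely $L^1$ in time rather than $L^\infty$. Getting this balance right --- and checking that the final constant $C$ depends only on bounds for $|U_1|_{L^2(0,T;V)}$ and $|U_2|_{L^2(0,T;V)}$, so that the map is Lipschitz on bounded sets --- is where the care is needed.
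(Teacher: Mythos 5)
Your proposal follows essentially the same route as the paper: subtract the two state equations, test the difference against $Aw$, split the convective term as $b(y_1,w,Aw)+b(w,y_2,Aw)$, use the trilinear estimate and the Lipschitz bound on $\mathcal{Q}$, absorb the $|Aw|_H^2$ contributions via Young's inequality, and integrate. If anything, your bookkeeping is tighter than the paper's: the paper absorbs \emph{all} right-hand terms directly into $-|Ay|^2$ by claiming the coefficients can be made uniformly small, whereas you correctly observe that the coefficients coming from $b(w,y_2,Aw)$ involve $|y_2|_V|y_2|_E$, which is only $L^1$ in time, and therefore must be handled through a Gronwall factor rather than by uniform absorption.
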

\begin{proof}
 Let $y_1, y_2$ be two solutions of (\ref{eq:2}) with the same initial
value $y_0$ and associated with the control functions $U_1, U_2, y_i = S(U_i)$. Denote by $y$ and $u$ the difference between solutions and control, i.e. $y=y_1-y_2$ and $U=U_1-U_2$.
We subtract the corresponding operator equations and take the inner product with ${A}y$ and we have the following,
$$
\begin{aligned}
\frac{1}{2} \frac{d}{dt}\left|y(t)\right|_{V}^{2} =  &-|A y(t)|^{2} + \langle B(y_{2}(t))-B(y_{1}(t)), A y(t)\rangle  \\ &+\langle|\nabla y_{1}(t)|^{2} y_{1}(t)-|\nabla y_{2}(t)|^{2} y_{2}(t), A y(t)\rangle  + \langle U(t), A y(t)\rangle
\end{aligned}
$$
Consider the following term,
$$
\begin{aligned}
B\left(y_{2}\right)-B\left(y_{1}\right) & =-\left[B\left(y\right)-B\left(y_{2}\right)\right] \\
& =-\left[B\left(y_{1}\right)+B^{\prime}\left(y_{2}\right) y\right]
\end{aligned}
$$
Hence
$$
\begin{aligned}
\left\langle B\left(y_{2}\right)-B\left(y_{1}\right), A y\right\rangle=&-\left\langle B\left(y\right)+B^{\prime}\left(y_{2}\right) y, A y\right\rangle \\
&=\left[0+\left\langle B^{\prime}\left(y_{2}\right) y, A y\right\rangle\right] \\
&=-[\;b(y_2,y,{A}y)+b\left(y, y_{2}, A y\right)\;].
\end{aligned}
$$
and since $\big|\big|\nabla y_{1}\big|^{2} y_{1} - \big|\nabla y_{2}\big|^{2} y_{2} \big| \leq C\big| y\big|_{V}$, so we have

$$
\begin{aligned}
\big|\langle|\nabla y_{1}|^{2} y_{1}-|\nabla y_{2}|y_{2}, Ay\rangle\big| \leq {C}|Ay|^{2} .
\end{aligned}
$$
Again, using the previous results of the trilinear map $b$, for any $u\in{{V}}$, $v\in E$, and $\phi \in {{H}}$, we have the following inequality:

\begin{align*}
    |b(u,v,\phi)| \le \sqrt{2}|u|_{{H}}^{\frac{1}{2}}|u|_{{V}}^{\frac{1}{2}}|v|_{{V}}^{\frac{1}{2}}|v|_{E}^{\frac{1}{2}}|\phi|_{{H}}.
\end{align*}

So, we have,
\begin{align*}
    \left|b\left(y_2, y, A y\right)\right| \leq C |y|_{E}^{2}.
\end{align*}
Similarly,
\begin{align*}
    \left|b\left(y, y_2, A y\right)\right| \leq C |y|_{E}^2.
\end{align*}
Now by Young's inequality for a given $\varepsilon$ we have
\begin{align*}
   \langle U, y\rangle \le C\varepsilon |y|_{E}^2 + \frac{C}{4\varepsilon} |U|_{{V} }^2.
\end{align*}
We will choose $\varepsilon$ in such a way that $-1$ will dominate all other coefficients of $|Ay|^2$, that is,

\begin{align*}
\frac{1}{2} \frac{d}{d t}|y(t)|_{V}^{2}+k|Ay|^{2} \leq C|U|_{{V} }^2.
\end{align*}
Here $k>0$. \\
 Therefore by taking the integration from 0 to $T$, we can say 
 $$|y|_{L^{\infty}(0 , T ; V)}^2 \leq C|U|_{L^2(0 , T ; V)}^2$$.

Again 
\begin{align*}
   k|A y|^{2} \leq C |U|_{V}^{2}.
\end{align*}
So by taking the integration from $0$ to $T$ we have $|y|_{L^2(0, T; D(A))}^2\le C |U|_{L^2(0, T; V)}^2$.
Hence the solution mapping $S(U)=y$ is lipschitz continuous from $L^{2}(0, T ; V)$ to $L^{2}(0, T ; D(A)) \cap L^{\infty}(0, T ; V)$.
\end{proof}
Now, we will demonstrate the Fréchet differentiability of the solution mapping.
\begin{lemma}
    The control-to-state mapping exhibits Fréchet differentiability, acting as a mapping from $L^{2}(0, T ; V)$ to $L^{2}(0, T ; D(A)) \cap L^{\infty}(0, T ; V)$. The derivative at $\bar{U} \in L^{2}(0, T ; V)$ in the direction $h \in L^{2}(0, T ; V)$ is expressed as $S'(\bar{U})h = y$, where $y$ represents the weak solution of
    \begin{align*}
    y_{t}+A y+B^{\prime}(\bar{y} ) y  -|\nabla \bar{y} |^{2}y -2\langle\nabla \bar{y} , \nabla y\rangle\bar{y} 
 &=h.\\
 y(0)&=0.
\end{align*}
with $S(\bar{U} )=\bar{y}$.
\end{lemma}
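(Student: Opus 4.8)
The plan is to verify Fréchet differentiability directly from the definition: take the solution operator of the linearized equation as the candidate derivative and show that the first–order Taylor remainder is of order $o\!\left(|h|_{L^2(0,T;V)}\right)$. First I would confirm that the candidate is legitimate. For each $h\in L^2(0,T;V)$ the linearized problem in the statement has a unique solution $y\in X_T=C([0,T];V)\cap L^2(0,T;E)$ by the linearized–equation analysis carried out above (the map $\Phi_T$ is globally Lipschitz, so Theorem 1.9.1 of \cite{cartan1983differential} yields a unique global solution). Since that equation is linear in the pair $(y,h)$, the assignment $h\mapsto y$ is linear; and an energy estimate obtained by pairing with $Ay$—identical in structure to the one in the Lipschitz lemma above—gives $|y|_{L^2(0,T;D(A))\cap L^\infty(0,T;V)}\le C\,|h|_{L^2(0,T;V)}$, so $S'(\bar y)h:=y$ is a bounded linear operator into the target space.

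Next I would set up the remainder. Write $\bar y=S(\bar U)$, $y^h=S(\bar U+h)$, $y=S'(\bar U)h$, and put $z=y^h-\bar y$ and $w=z-y$; Fréchet differentiability amounts to $|w|_{L^2(0,T;D(A))\cap L^\infty(0,T;V)}=o(|h|)$. Subtracting the state equations for $y^h$ and $\bar y$ and expanding the quadratic and cubic nonlinearities about $\bar y$, namely $B(\bar y+z)=B(\bar y)+B'(\bar y)z+B(z,z)$ and, with $\mathcal Q(u)=|\nabla u|^2u$,
\begin{align*}
\mathcal Q(\bar y+z)=\mathcal Q(\bar y)+\mathcal Q'(\bar y)z+N_{\mathcal Q}(z),\quad \mathcal Q'(\bar y)z=|\nabla\bar y|^2 z+2\langle\nabla\bar y,\nabla z\rangle\bar y,
\end{align*}
where $N_{\mathcal Q}(z)=2\langle\nabla\bar y,\nabla z\rangle z+|\nabla z|^2\bar y+|\nabla z|^2 z$, I would then subtract the linearized equation for $y$ to obtain
\begin{align*}
w_t+Aw+B'(\bar y)w-|\nabla\bar y|^2 w-2\langle\nabla\bar y,\nabla w\rangle\bar y=-B(z,z)+N_{\mathcal Q}(z),\qquad w(0)=0.
\end{align*}
The forcing on the right collects exactly the terms that are quadratic and cubic in $z$.

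Finally I would run the energy estimate by pairing with $Aw$, which yields
\begin{align*}
\frac12\frac{d}{dt}|w|_V^2+|Aw|^2=-\langle B'(\bar y)w,Aw\rangle+\langle \mathcal Q'(\bar y)w,Aw\rangle+\langle -B(z,z)+N_{\mathcal Q}(z),Aw\rangle.
\end{align*}
The linear terms are treated exactly as in the Lipschitz lemma: using $|b(u,v,\phi)|\le\sqrt2\,|u|_H^{1/2}|u|_V^{1/2}|v|_V^{1/2}|v|_E^{1/2}|\phi|_H$ together with $\bar y\in L^\infty(0,T;V)$ and Young's inequality, they are absorbed into a small multiple of $|Aw|^2$ plus a $C|w|_V^2$ term destined for Gronwall. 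The crux—and the main obstacle—is to establish the forcing bound $\int_0^T\!\big|-B(z,z)+N_{\mathcal Q}(z)\big|_H^2\,dt\le C\,|h|_{L^2(0,T;V)}^4$. Here the previous Lipschitz lemma is decisive, since it supplies $|z|_{L^\infty(0,T;V)}\le C|h|$ and $|z|_{L^2(0,T;D(A))}\le C|h|$; every term of the forcing, being at least quadratic in $z$, then contributes $O(|h|^4)$ (for instance $|B(z,z)|_H\le\sqrt2\,|z|_H^{1/2}|z|_V|z|_E^{1/2}$ and $|2\langle\nabla\bar y,\nabla z\rangle z|_H\le 2|\bar y|_V|z|_V|z|_H$, each square-integrable in time of order $|h|^4$). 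Applying Gronwall's inequality then gives $|w|_{L^2(0,T;D(A))\cap L^\infty(0,T;V)}^2\le C|h|^4$, i.e. $|w|\le C|h|^2=o(|h|)$, which proves that $S'(\bar U)h=y$ is the Fréchet derivative. The delicate bookkeeping is precisely the absorption of the linear $B'(\bar y)$ and $\mathcal Q'(\bar y)$ terms into $|Aw|^2$ while keeping the time-dependent coefficients in $L^\infty(0,T)$, which is guaranteed by the regularity $\bar y\in L^\infty(0,T;V)\cap L^2(0,T;D(A))$ so that Gronwall applies.
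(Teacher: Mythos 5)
Your proposal is correct and follows essentially the same route as the paper: decompose the difference $S(\bar U+h)-S(\bar U)$ into the linearized solution plus a remainder satisfying the linearized equation forced by the quadratic and cubic terms, estimate that remainder by pairing with $A$ applied to it, and invoke the Lipschitz continuity of $S$ to conclude the remainder is $O(|h|^2)=o(|h|)$. Your $w$ is exactly the paper's $r$ (the paper writes $d=z+r$ rather than $w=z-y$), and your explicit verification that the candidate derivative is a bounded linear operator is a small but welcome addition the paper leaves implicit.
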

\begin{proof}
    Define $y=S(\bar{U} +h)$. Hence,
    \begin{align}{\label{eq:5}}
         \bar{y}_{t}+A \bar{y}+B(\bar{y})-|\nabla \bar{y}|^2 \bar{y}=\bar{U} , \end{align}
    \begin{align}{\label{eq:4}}
          y_{t}+A y+B(y)-|\nabla y|^{2} y=\bar{U} +h.
    \end{align}
    Let  $y-\bar{y}=d$, or $y=d+\bar{y}$. Put the value of $d$ in (\ref{eq:4}) we obtain
    $$
\begin{aligned}
& d_{t}+\bar{y}_{t}+A d+A \bar{y}+B(d+\bar{y})-|\nabla(d+\bar{y})|^{2}(d+\bar{y}) =\bar{U} +h.
\end{aligned}
$$
From the term $|\nabla(d+\bar{y})|^{2}(d+\bar{y})$ we have
$$
\begin{aligned}
& |\nabla(d+\bar{y})|^{2}(d+\bar{y}) = \langle\nabla d+\nabla \bar{y}, \nabla d+\nabla \bar{y}\rangle(d+\bar{y}) \\
= & \left|\nabla d|^{2} d+2\langle\nabla d, \nabla \bar{y}\rangle d+2\langle\nabla d, \nabla \bar{y}\rangle \bar{y}\right. +|\nabla \bar{y}|^{2} d+|\nabla \bar{y}|^{2} \bar{y} +|\nabla d|^{2} \bar{y}.
\end{aligned}
$$
Since $B(d+\bar{y})=  B(d)+B^{'}(\bar{y}) d+B(\bar{y})$, the following expression can be written:
$$
\begin{aligned}
 d_{t}+A d+ B^{\prime}(\bar{y}) d-|\nabla d|^{2} d+B(d) +\bar{y}_{t}+A \bar{y}&-|\nabla \bar{y}|^{2} \bar{y} +B(\bar{y})=h+2\langle\nabla d, \nabla \bar{y}\rangle d\\& +|\nabla \bar{y}|^{2} d+2\langle\nabla d, \nabla \bar{y}) \bar{y} +|\nabla d|^{2} \bar{y}+\bar{U} 
\end{aligned}
$$
Since $S(\bar{U} )=\bar{y}$, then we have,
\begin{align*}
     d_{t}+A d+B^{\prime}(\bar{y}) d-|\nabla \bar{y}|^{2} d -2\langle\nabla \bar{y}, \nabla d\rangle \bar{y}=h-B(d)+|\nabla d|^{2} d +& 2\langle\nabla \bar{y}, \nabla d\rangle \bar{d}\\&+\mid \nabla d \mid^{2} \bar{y}.
\end{align*}
We split $d$ into $d=z+r$, where $z$ and $r$ are the weak solutions of the following systems respectively
\begin{align*}
    \begin{cases}
        z_{t}+A z+B^{\prime}(\bar{y}) z-|\nabla \bar{y}|^{2} z-2\langle\nabla \bar{y}, \nabla z\rangle \bar{y}=h ,\\
        z(0)=0.\\
    \end{cases}
\end{align*}
\begin{align*}
    \begin{cases}
                 r_{t}+A r+B^{\prime}(\bar{y}) r-|\nabla \bar{y}|^{2} r -2\langle\nabla \bar{y} , \nabla r\rangle \bar{y}= -B(d)+|\nabla d|^{2} d 
 +2\langle\nabla d, \nabla \bar{y}\rangle d+|\nabla d|^{2} \bar{y}\\
 r(0)=0.
    \end{cases}
\end{align*}
Let $X=L^{2}(0, T ; D(A)) \cap L^{\infty}(0, T ; V)$. To finalize the proof, it is sufficient to show the following: 
\begin{align}{\label{eq:7}}
    \frac{\left|y-\bar{y}-z\right|_X}{\left|h\right|_{L^2(0, T; V)}} \rightarrow 0 \quad \text{as} \quad \left|h\right|_{L^2(0, T; V)} \rightarrow 0.
\end{align}
Then, the function $z$ will serve as the Fréchet derivative of $S$ at $\bar{U} $ in the direction of $h$, denoted as $z = S'(\bar{U} )h$.\\
Consider $\left|y-\bar{y}-z\right|_X=\left|r\right|_X$. To estimate this norm we first take 
\begin{align*}
     r_{t}+A r+B^{\prime}(\bar{y}) r-|\nabla \bar{y}|^{2} r -2\langle\nabla \bar{y} , \nabla r\rangle \bar{y}= -B(d)+|\nabla d|^{2} d 
 +2\langle\nabla d, \nabla \bar{y}\rangle d+|\nabla d|^{2} \bar{y}
\end{align*}
Let us take the inner product with $Ar$ and then
$$
\begin{aligned}
 \left\langle r_{1}, A r\right\rangle=-|A r|^{2}-\left\langle B^{\prime}(\bar{y}) r, A r\right\rangle+&|\nabla \bar{y}|^{2}\langle r, A r\rangle  +2\langle\nabla \bar{y}, \nabla r\rangle\langle\bar{y}, A r\rangle  +|\nabla d|^{2}\langle d, A r\rangle\\&-\langle B(d), A r\rangle +2\langle\nabla d, \nabla \bar{y}\rangle\langle d, A r\rangle  +|\nabla d|^{2}\langle\bar{y}, A r\rangle.
\end{aligned}
$$
Since $B^{\prime}(\bar{y}) r=B(\bar{y}, r)+B(r, \bar{y})$. So $\left\langle B^{\prime}(\bar{y}) r, A r\right\rangle=b(\bar{y}, r, A r)+b(r, \bar{y}, A r\rangle$

$$
\text { and }\left|\left\langle B^{\prime}(\bar{y}) r, A r\right\rangle\right| \leq \sqrt{2}|\bar{y}|_{H}^\frac{1} {2}|\bar{y}|_{V}^\frac{1} {2}|r|_{V}^\frac{1} {2}|r|_{E}^\frac{1} {2}|A r|+\sqrt{2}|r|_{H}^\frac{1} {2}|r|_{V}^\frac{1} {2}|\bar{y}|_{V}^\frac{1} {2}|\bar{y}|_{E}^\frac{1} {2}|A r|.
$$
Since $\bar{y}=S(\bar{U} )$ and therefore $|\bar{y}|_{H}^{2} \leq 1$. By similar argument  $|y|_{H}^{2} \leq 1$. So $|d|_{H}^{2} \leq 2$.
As $\bar{y} \in L^{\infty}(0, T ; V) \cap L^{2}(0, T; D(A))$
$$
\Rightarrow|\bar{y}|_{D(A)}<\infty.
$$
Hence
$$
\begin{aligned}
& \left|\left\langle B^{\prime}(\bar{y}) r, A r\right\rangle\right| \leq C|A r|^{2}.
\end{aligned}
$$
Again
$|\left.|\nabla \bar{y}|^{2}\langle r, A r\rangle|\leq C| A r\right|^{2}$ (By Cauchy Schwartz inequality).\\
Moreover,
$$
\begin{aligned}
& |2\langle\nabla \bar{y}, \nabla r\rangle\langle\bar{y}, A r\rangle| \leq C|A r|^{2} \;\text{and} \; \left.|| \nabla d\right|^{2}\langle d, r\rangle|\leq C| \nabla d|^{2}|r| \leq C {\varepsilon}|d|_E^{4}+\frac{C}{4 \varepsilon}|A r|^{2}.
\end{aligned}
$$
We have used the Youngs inequality for a given $\varepsilon$. Again by similar arguments, we have,
$$
\begin{aligned}
& |\langle B(d), A r\rangle| \leq C|d|_{E}^{2}|A r| \leq C \varepsilon|d|_{E}^{4}+\frac{C}{4 \varepsilon}|A r|^{2}. \\
& |2\langle\nabla d, \nabla \bar{y}\rangle\langle d, A r\rangle|\leq C{\varepsilon} |d|_{E}^{4}+\frac{C}{4 \varepsilon}|A r|^{2}. \\
& \left.|| \nabla d\right|^{2}\langle \bar{y}, A r\rangle|\leq C \varepsilon| d|^{4}_E+\frac{C}{4 \varepsilon}|A r|^{2}.
\end{aligned}
$$
We select $\varepsilon$ in a manner that ensures the coefficients of $|Ar|^2$ remain negative on the right-hand side.
So, $\frac{1}{2} \frac{d}{d t}|r|_{V}^{2} \leq C|d|_{E}^{4}$ and    $|A r|^{2} \leq C |d|_{E}^{4}$. Performing the integration from $0$ to $T$ yields the following result:
\begin{align*}
& |r|_{X}^{2} \leq C|d|_{X}^{4} \\
& \text { or }|r|_{X} \leq C|d|_{X}^{2}.
\end{align*}
\par By Lipschitz continuity of the solution mapping we get  $|d|_X^2=|y-\bar{y}|_X^2=|S(\bar{U} +h)-S(\bar{U} )|_X^2\leq |h|_{L^2(0, T; V)}^2$. Thus ({\ref{eq:7}}) fulfilled and so $S$ is  Fréchet differentiable and $S'(\bar{U} )h=z$.
\end{proof}
To establish the first-order optimality conditions, it is necessary to have the adjoint operator of $S'(u)$, which is represented as $S'(u)^*$. The investigation of this adjoint mapping was conducted by Hinze \cite{Hinze2002OptimalAI} and Hinze and Kunisch \cite{Hinze2001SecondOM}. The study on this adjoint map has also been carried out and documented in \cite{phdthesis}.
\begin{lemma}\label{lem:1}
   Let $\bar U\in L^2(0, T; V)$. Then $S'(\bar U)^*$ is a continuous linear map from $X^*$ to $L^2(0, T; V)$ . Then for $g\in X^*$, $\lambda=S'(\bar U)^*g$ iff 
   \begin{align*}
       (w_t + Aw+B'(\bar y)w-|\nabla (\bar y)|^2w-2\langle \nabla w,\nabla \bar{y}\rangle, \lambda)_{L^2(0, T;V'),L^2(0, T; V)}=(g,w)_{X^*, X}.
   \end{align*}
   $\forall w\in X$.
\end{lemma}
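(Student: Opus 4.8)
The plan is to use the fact, established in the previous lemma, that $S'(\bar U)$ is exactly the solution operator of the linearized system: for $h \in L^2(0,T;V)$ the function $z = S'(\bar U)h$ is the unique weak solution of
\begin{align*}
z_t + A z + B'(\bar y) z - |\nabla \bar y|^2 z - 2\langle \nabla \bar y, \nabla z\rangle \bar y = h, \qquad z(0)=0 .
\end{align*}
Abbreviating the linearized operator by $\mathcal{L}w := w_t + A w + B'(\bar y) w - |\nabla \bar y|^2 w - 2\langle \nabla \bar y, \nabla w\rangle \bar y$, this says $\mathcal{L}\big(S'(\bar U)h\big)=h$ with vanishing initial datum, so that $S'(\bar U)$ inverts $\mathcal{L}$ on the linearized solution class. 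The a priori estimate behind the Lipschitz-continuity lemma, applied verbatim to the linear equation, yields $|z|_X \le C|h|_{L^2(0,T;V)}$; hence $S'(\bar U)\colon L^2(0,T;V)\to X$ is a bounded linear operator.

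I would first dispose of the continuity assertion. Since $S'(\bar U)$ is bounded and linear between $L^2(0,T;V)$ and $X$, its adjoint is automatically a bounded, hence continuous, linear map between the corresponding dual spaces; after the identifications built into the Gelfand triple $V \hookrightarrow H \hookrightarrow V'$ this is precisely a continuous linear map $S'(\bar U)^*\colon X^*\to L^2(0,T;V)$. With respect to the pivot duality the defining relation of the adjoint reads
\begin{align*}
\lambda = S'(\bar U)^* g \iff \big(g, S'(\bar U)h\big)_{X^*,X} = \big(h, \lambda\big)_{L^2(0,T;V'),\,L^2(0,T;V)} \quad \text{for all } h \in L^2(0,T;V).
\end{align*}

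The heart of the argument is then a change of the test variable. For the converse direction, assume the variational identity holds for every $w$ and specialise it to $w = S'(\bar U)h$ with $h \in L^2(0,T;V)$ arbitrary; since $\mathcal{L}w = h$, the identity collapses to $\big(g, S'(\bar U)h\big)_{X^*,X} = (h,\lambda)_{L^2(0,T;V'),L^2(0,T;V)}$ for all $h$, which by the defining relation means exactly $\lambda = S'(\bar U)^* g$. For the forward direction, starting from $\lambda = S'(\bar U)^* g$, I would take a $w$ in the solution class, set $h := \mathcal{L}w \in L^2(0,T;V)$, use uniqueness to write $w = S'(\bar U)h$, and read the defining relation backwards to obtain
\begin{align*}
(g,w)_{X^*,X} = (\mathcal{L}w, \lambda)_{L^2(0,T;V'),\,L^2(0,T;V)},
\end{align*}
which is the asserted identity. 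This establishes the equivalence.

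I expect the main obstacle to be the bookkeeping that legitimises the quantifier ``for all $w\in X$''. The pairing $(\mathcal{L}w,\lambda)_{L^2(0,T;V'),L^2(0,T;V)}$ only makes sense once every term of $\mathcal{L}w$, in particular $w_t$, is shown to lie in $L^2(0,T;V')$; the stationary terms are harmless ($A\colon V\to V'$ is bounded, so $Aw\in L^2(0,T;V')$, and the remaining lower-order terms are controlled as in the Fréchet-differentiability lemma), but $w_t\in L^2(0,T;V')$ is not automatic for a generic element of $X$. It does hold for $w = S'(\bar U)h$, where $w_t = h - Aw - \cdots$ is a finite sum of $L^2(0,T;V')$ contributions; consequently the identity should be read over the image of $S'(\bar U)$, i.e. over those $w\in X$ with $w_t\in L^2(0,T;V')$ and $w(0)=0$. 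The decisive fact underpinning the forward direction is therefore that $\mathcal{L}$ maps this solution class bijectively onto $L^2(0,T;V)$, which is precisely the existence-and-uniqueness result for the linearized equation established earlier, while the vanishing initial condition $w(0)=0$ is what suppresses the temporal boundary term that an integration by parts in $t$ would otherwise contribute.
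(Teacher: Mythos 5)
Your argument is essentially the paper's own proof: the paper introduces the linearized operator $T w = w_t + Aw + B'(\bar y)w - |\nabla \bar y|^2 w - 2\langle \nabla \bar y,\nabla w\rangle \bar y$, observes $T^{-1}=S'(\bar U)$ by the existence--uniqueness result for the linearized system, and derives the variational identity from $(T^{-1})^*=(T^*)^{-1}$, which is exactly your substitution $w=S'(\bar U)h$ written in operator form. Your closing caveat --- that the pairing only makes sense for $w$ in the solution class with $w_t\in L^2(0,T;V')$ and $w(0)=0$, so the quantifier ``for all $w\in X$'' should be read over the range of $S'(\bar U)$ --- is a genuine point that the paper's proof silently glosses over, and is worth retaining.
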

\begin{proof}
    Consider the linearized equation 
    \begin{align*}
          y_{t}+A y+B^{\prime}(\bar{y} ) y  -|\nabla \bar{y} |^{2}y -2\langle\nabla \bar{y} , \nabla y\rangle\bar{y} =h.
    \end{align*}
    Here $\bar y=S(\bar U)$.
    Let us define the operator $T:X\rightarrow L^2(0, T; V')$ by
    \begin{align*}
        Ty=y_{t}+A y+B^{\prime}(\bar{y} ) y  -|\nabla \bar{y} |^{2}y -2\langle\nabla \bar{y} , \nabla y\rangle\bar{y}.
    \end{align*}
    Hence, the linearized equation can be expressed in the following manner:
    \begin{align*}
        Ty=h.
    \end{align*}
    $T$ is clearly a linear map and $T^{-1}=S'(\bar U)$, so $T^{-1}$ is linear and continuous.
    \par The map $T^*$ is a linear map from $L^2(0, T; V)$ to $X^*$ and its action defined by 
    \begin{align*}
        (T^* v, y)_{X^*,X}= (y_{t}+A y+B^{\prime}(\bar{y} ) y  -|\nabla \bar{y} |^{2}y -2\langle\nabla \bar{y} , \nabla y\rangle\bar{y}, v)_{L^2(0, T;V'),L^2(0, T; V)}
    \end{align*}
    for $v\in L^2(0, T; V)$.\\
    $(T^{-1})^*$ is a linear map from $X^*$ to $L^2(0, T; V)$ and $(T^{-1})^*=S'(\bar U)^*$. Then for $g\in X^*$ there exists $\lambda\in L^2(0, T; V)$ such that 
    $$(T^{-1})^* g=\lambda=S'(\bar U)^*g,\;\text{or } g=T^* \lambda.$$
    since $(T^{-1})^*=(T^*)^{-1}$. Then,
     \begin{align*}\label{ew:1}
        (T^* \lambda, w)_{X^*,X}&=  (w_t + Aw+B'(\bar y)w-|\nabla (\bar y)|^2w-2\langle \nabla w,\nabla \bar{y}\rangle, \lambda)_{L^2(0, T;V'),L^2(0, T; V)}\\&=(g,w)_{X^*, X}.
    \end{align*}
\end{proof}
\section{The optimal control problem}
For the purpose of proving the existence of optimal controls, we can take the cost functional of the form,

\[
J(y,U):=\frac{1}{2} \int_{0}^{T}|A^{1 / 2} y(t)|_{H}^{2} dt+\frac{1}{2} \int_{0}^{T}|U(t)|_{V}^{2} dt.
\]
We define the set of admissible controls $U_{ad}$ by 
 $$U_{ad}:=\{ U \in T_u \mathrm{M} : |U|_{V} \text{ is bounded}\}$$.
 \par The optimization problem is
 $$\min J(y,U)$$
 subject to the state equation 
 \begin{align*}
      y_{t}+A y+B(y)-|\nabla y|^{2} y={U}
      y(0)=y_0
 \end{align*}
 $U\in U_{ad}$
 \subsubsection{Existence of solutions}
\begin{theorem}{\label{thm:10}}
The optimal control problem admits a globally optimal solution $U \in U_{ad}$ with an associated state $y \in L^2(0, T; E) \cap L^{\infty}(0, T; V)$.
\end{theorem}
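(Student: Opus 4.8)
The plan is to use the direct method of the calculus of variations. I would start by observing that the cost functional $J$ is bounded below by zero, so the infimum $j:=\inf_{U\in U_{ad}} J(y,U)$ over admissible pairs is a finite nonnegative number. I would then select a minimizing sequence $(U_n,y_n)$, meaning $y_n=S(U_n)$ with $U_n\in U_{ad}$ and $J(y_n,U_n)\to j$. The key point is that $J(y_n,U_n)$ being bounded forces $\tfrac12\int_0^T|U_n(t)|_V^2\,dt$ to be bounded, so $(U_n)$ is bounded in $L^2(0,T;V)$, and since $U_{ad}$ asks $|U|_V$ to be bounded, the sequence lives in a fixed ball. By reflexivity of $L^2(0,T;V)$ we can extract a subsequence (not relabelled) with $U_n\rightharpoonup \bar U$ weakly in $L^2(0,T;V)$.

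Next I would pass to the limit in the state equation. The a priori estimates already established in the Lipschitz-continuity lemma give $|y_n|_{L^\infty(0,T;V)}^2+|y_n|_{L^2(0,T;D(A))}^2\le C|U_n|_{L^2(0,T;V)}^2$, uniformly in $n$. Hence $(y_n)$ is bounded in $L^2(0,T;D(A))\cap L^\infty(0,T;V)$, and from the equation $y_n'=-Ay_n-B(y_n)+|\nabla y_n|^2y_n+U_n$ one also bounds $(y_n')$ in $L^2(0,T;H)$. Using the Aubin--Lions compactness theorem (Theorem \ref{thm:A2}) with $X_0=D(A)$, $X=V$, $X_1=H$, the sequence $y_n$ is relatively compact in $L^2(0,T;V)$, so after a further subsequence $y_n\to\bar y$ strongly in $L^2(0,T;V)$, weakly in $L^2(0,T;D(A))$, and weak-$*$ in $L^\infty(0,T;V)$. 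The strong $L^2(0,T;V)$ convergence is exactly what makes the nonlinear terms pass to the limit: Lemma 2.1 controls $|\mathcal Q(y_n)-\mathcal Q(\bar y)|_H$ by $|y_n-\bar y|_V(|y_n|_V+|\bar y|_V)^2$, and the bilinear estimate handles $B(y_n)\to B(\bar y)$, just as in the existence proof of Section 3. The linear terms $Ay_n$ and $U_n$ pass by weak convergence, so $\bar y=S(\bar U)$ and the pair $(\bar U,\bar y)$ is admissible.

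Finally I would invoke weak lower semicontinuity of $J$. Since $y_n\to\bar y$ strongly in $L^2(0,T;V)=L^2(0,T;D(A^{1/2}))$, the first term $\tfrac12\int_0^T|A^{1/2}y_n|_H^2\,dt$ converges to $\tfrac12\int_0^T|A^{1/2}\bar y|_H^2\,dt$, while the second term, being the square of the norm in $L^2(0,T;V)$, is convex and continuous hence weakly lower semicontinuous, so $\tfrac12\int_0^T|\bar U|_V^2\,dt\le\liminf\tfrac12\int_0^T|U_n|_V^2\,dt$. Combining these gives $J(\bar y,\bar U)\le\liminf J(y_n,U_n)=j$, and since $(\bar U,\bar y)$ is admissible we also have $J(\bar y,\bar U)\ge j$; thus $J(\bar y,\bar U)=j$ and $\bar U$ is a global minimizer.

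I expect the main obstacle to be verifying that the weak limit $\bar U$ genuinely lies in $U_{ad}$ and that the limit pair still satisfies the constraint structure built into the problem, namely that $\bar U\in T_{\bar y}\mathrm{M}$ and $\bar y$ stays on the manifold $\mathrm{M}$; the tangency condition $\langle U,u\rangle_H=0$ is only preserved under weak convergence if one knows the inner product passes to the limit, which requires the strong $L^2(0,T;V)$ convergence of $y_n$ together with the boundedness of $U_n$, and some care is needed because the admissible set is defined relative to the varying state. The closedness of $U_{ad}$ under weak convergence is therefore the delicate point, and I would address it using precisely the strong convergence of $y_n$ obtained from Aubin--Lions.
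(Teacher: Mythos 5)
Your proposal is correct and follows essentially the same route as the paper: the direct method with a minimizing sequence, weak compactness from the boundedness of $U_n$ in $L^2(0,T;V)$ and of $y_n$ in $L^2(0,T;D(A))\cap L^\infty(0,T;V)$, passage to the limit in the state equation, and weak sequential lower semicontinuity of the convex cost functional. If anything you are more careful than the paper at the two points it glosses over: the paper simply asserts ``term-by-term convergence'' in the weak formulation, whereas you supply the Aubin--Lions compactness argument and the strong $L^2(0,T;V)$ convergence needed to pass to the limit in $B(y_n)$ and $|\nabla y_n|^2 y_n$; and the paper asserts $U\in U_{ad}$ from closedness and convexity without noting, as you do, that $U_{ad}$ is defined by a tangency condition relative to the state and therefore requires the strong convergence of $y_n$ to close.
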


\noindent Proof: Let $y$ be the solution of the following system,

\[
\begin{aligned}
& y_{t}+A y+B(y)-|\nabla y|^{2} y=U , \\
& y(0)=u_{0}.
\end{aligned}
\]

The space $U_{ad}:=\{ U \in T_u \mathrm{M}: |U|_{V} \text{ is bounded}\}$.

First, we note that for each $U \in L^{2}(0, T ; V)$, we get a unique solution $y \in L^{\infty}(0, T ; V) \cap L^{2}(0, T ; D(A))$ such that $J(U) < \infty$.

For each such admissible pair,

\[
\begin{aligned}
&M_{t}(y, U, v)=0 \quad \forall \;v \in C_{c}^{\infty}[0, T] \text{ where all the derivatives of $v$ are bounded}.
\end{aligned}
\]

\[
\begin{aligned}
\text{Where } M_{t}(y, U, v)=\langle y(t), v\rangle
+&\int_{0}^{t}\left\langle A y(r)+B(y(r))_{-}\right. \left.|\nabla y(r)|^{2} y(r)-U, v\right\rangle dt\\&-\left\langle y_{0}, v\right\rangle.
\end{aligned}
\]

Clearly, $0 \leq J(U)$ for each admissible pair $(y, U)$. Hence, there exists an infimum of $J$ over all admissible controls and states,

\[
0 \leq \bar{J}:=\inf_{U \in U_{ad}} J(U) < \infty.
\]

Moreover, there is a sequence $\left(y_{n}, U_{n}\right)$ of admissible pairs such that $J\left(y_{n}, U_{n}\right) \longrightarrow \bar{J}$ as $n \rightarrow \infty$. The set $\{U_n\}$ is bounded in $U_{ad}$, so ${y_n}$ is bounded in $L^{\infty}(0, T ; V) \cap L^{2}(0, T ; D(A))$. Therefore, we can extract a subsequence $(y'_n, U'_n)$ converging weakly to some limit $(y, U)$. Since the space $U_{ad}$ is closed and convex, $U \in U_{ad}$. We have term-by-term convergence, so $M_{t}(y, U, v) = 0$. Hence, $(y, U)$ is admissible.

Note that the functional 

\[
F(y, U):=\frac{1}{2} \int_{0}^{T}|A^{1 / 2} y(t)|_{H}^{2} dt+\frac{1}{2} \int_{0}^{T}|U(t)|_{V}^{2} dt
\]

is convex, continuous, and hence weakly sequentially lower semicontinuous. So we have $F(y, U) \leq \lim_{n\rightarrow\infty} \inf F(y_n,U_n)$. Thus we have 

\[
J(y,U) \leq \bar{J}.
\]

Since $(y, U)$ is admissible and $\bar{J}$ is the infimum over all admissible pairs, it follows that $\bar{J} = J(y, U)$. Hence the claim is proved.

\subsection{Lagrange functional} 
We aim to define the Lagrange functional $\mathcal{L}: X\times{L^2(0, T; V)}\times{L^2(0, T; V)}$ for the optimal control problem as follows:
\begin{align}
    \mathcal{L}(y, U, \lambda)=J(y,U)-(y_t + Ay+ B(y)-|\nabla y|^2y-U, \lambda)_{L^2(0, T;V'),L^2(0, T; V)}
\end{align}
The first-order derivative of $\mathcal{L}$ with respect to $y $ and $U$ in the direction of $w\in X$ and $h\in L^2(0, T; V)$ are denoted by $\mathcal{L}_y(y,U,\lambda)w$ and $\mathcal{L}_U(y,U,\lambda)h$ respectively and 
\begin{align*}
 \mathcal{L}_y(y,U,\lambda)w&=-(w_t + Aw+B'( y)w-|\nabla (y)|^2w-2\langle \nabla w,\nabla {y}\rangle, \lambda)_{L^2(0, T;V'),L^2(0, T; V)} \\&+ \langle y, w\rangle_{L^2(0, T; V)} ,\\
 \mathcal{L}_U(y,U,\lambda)h&= \langle U, h\rangle_{L^2(0, T; V)} + (U,\lambda)_{L^2(0, T;V),L^2(0, T; V')}.
\end{align*}
\subsection{First order necessary optimality conditions}
First-order necessary optimality conditions can be found in many literature sources. One can follow the \cite{articlee} and \cite{Trltzsch2006SecondorderSO} for more details. The necessary optimality conditions can be obtained by applying the formal Lagrange method. For more detailed information on the formal Lagrange method, refer to section 2.10 of \cite{tröltzsch2010optimal}. Now, we will state and demonstrate the first-order optimality condition.
 \begin{theorem}{(\textbf{Necessary condition})}.
     Let $\bar U$ be locally optimal in $L^2(0, T; V)$ with associated state $\bar y=S(\bar U).$ Then there exists $\lambda\in L^2(0, T; V)$ such that
     \begin{align*}
         \mathcal{L}_y(\bar y,\bar U,\lambda)w&=0\quad\quad\forall w\in X,\\
         \mathcal{L}_U(\bar y,\bar U,\lambda)(U-\bar U)&\ge 0\quad\quad\forall U\in U_{ad}.
     \end{align*}
 \end{theorem}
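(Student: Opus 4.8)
The plan is to apply the formal Lagrange method: pass to the reduced cost functional depending on the control alone, extract the variational inequality forced by local optimality, and then introduce the adjoint state $\lambda$ precisely so that the reduced gradient becomes $\mathcal{L}_U$.

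First I would set $f(U):=J(S(U),U)$. Since $S$ is Fréchet differentiable from $L^2(0,T;V)$ into $X$ by the preceding lemma and $J$ is a smooth quadratic functional, the chain rule gives
\[
f'(\bar U)h = J_y(\bar y,\bar U)\big(S'(\bar U)h\big)+J_U(\bar y,\bar U)h,\qquad \bar y=S(\bar U),
\]
where $J_y(\bar y,\bar U)w=\langle \bar y,w\rangle_{L^2(0,T;V)}$ and $J_U(\bar y,\bar U)h=\langle \bar U,h\rangle_{L^2(0,T;V)}$. Because $U_{ad}$ is convex and $\bar U$ is locally optimal, the standard first-order condition for minimization over a convex set yields the variational inequality $f'(\bar U)(U-\bar U)\ge 0$ for every $U\in U_{ad}$.

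Next I would construct $\lambda$ as the adjoint state. The functional $w\mapsto\langle \bar y,w\rangle_{L^2(0,T;V)}$ is bounded on $X$ — here one uses $\bar y\in L^\infty(0,T;V)$ together with the continuous embedding $X\hookrightarrow L^2(0,T;V)$ — so it defines some $g\in X^*$. By the definition of $\mathcal{L}_y$, requiring $\mathcal{L}_y(\bar y,\bar U,\lambda)w=0$ for all $w\in X$ is exactly the adjoint equation
\[
(Tw,\lambda)_{L^2(0,T;V'),L^2(0,T;V)}=\langle \bar y,w\rangle_{L^2(0,T;V)}=(g,w)_{X^*,X}\qquad\forall w\in X,
\]
where $T$ is the linearized operator of Lemma \ref{lem:1}. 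Since that lemma shows $T$ is invertible with $T^{-1}=S'(\bar U)$, the element $\lambda:=S'(\bar U)^*g=(T^*)^{-1}g$ lies in $L^2(0,T;V)$ and is the unique solution. This simultaneously produces $\lambda\in L^2(0,T;V)$ and establishes the first assertion $\mathcal{L}_y(\bar y,\bar U,\lambda)w=0$ for all $w\in X$.

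Finally I would insert this $\lambda$ into the variational inequality. Taking $w:=S'(\bar U)(U-\bar U)\in X$ and using $TS'(\bar U)=I$, so that $Tw=U-\bar U$, the adjoint identity gives $J_y(\bar y,\bar U)\big(S'(\bar U)(U-\bar U)\big)=(Tw,\lambda)=(U-\bar U,\lambda)$. Hence
\[
f'(\bar U)(U-\bar U)=(U-\bar U,\lambda)+J_U(\bar y,\bar U)(U-\bar U)=\mathcal{L}_U(\bar y,\bar U,\lambda)(U-\bar U),
\]
and the inequality $f'(\bar U)(U-\bar U)\ge 0$ becomes the second assertion. The main obstacle is the rigorous construction of $\lambda$ in $L^2(0,T;V)$ and the well-posedness of the duality pairings: this is exactly where the continuity and invertibility of $T$ from Lemma \ref{lem:1} are essential, while the justification of the chain rule for $f'$ is precisely the role of the Fréchet differentiability of $S$ established earlier.
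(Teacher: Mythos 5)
Your proposal is correct and follows essentially the same route as the paper: define $\lambda$ as the adjoint state $S'(\bar U)^*g$ with $g$ the functional induced by $\bar y$, invoke Lemma \ref{lem:1} for the adjoint identity (hence $\mathcal{L}_y(\bar y,\bar U,\lambda)w=0$), and combine the convexity of $U_{ad}$ with the variational inequality for the reduced functional to get the second assertion. You merely spell out the chain-rule computation and the substitution $w=S'(\bar U)(U-\bar U)$ that the paper compresses into a citation of Theorem 2.22 of Tr\"oltzsch, which is a welcome clarification rather than a different method.
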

 \begin{proof}
     We will consider $\lambda=S'(\bar U)\bar y$. Then 
     \begin{align*}
         \mathcal{L}_y(\bar y,\bar U,\lambda)w&=-(w_t + Aw+B'( \bar y)w-|\nabla (\bar y)|^2w-2\langle \nabla w,\nabla {\bar y}\rangle, \lambda)_{L^2(0, T;V'),L^2(0, T; V)} \\&+ \langle \bar y, w\rangle_{L^2(0, T; V)} 
     \end{align*}
   Utilizing the construction of $\lambda$ and the provided lemma \ref{lem:1}, we have \\
   \quad\quad\quad$\mathcal{L}_y(\bar y,\bar U,\lambda)w=0$ for all $w\in X.$
   \par Using Theorem (2.22) of \cite{tröltzsch2010optimal} and using the same construction of $\lambda$ we have, $\mathcal{L}_U(\bar y,\bar U,\lambda)(U-\bar U)=\langle \bar U, U-\bar U\rangle_{L^2(0, T; V)} + (\bar U,\lambda)_{L^2(0, T;V),L^2(0, T; V')}\ge 0$.
 \end{proof}
\bibliographystyle{plain}
\bibliography{main} 

\begin{thebibliography}{10}

\bibitem{articlee}
Frédéric Abergel and Roger Temam.
\newblock On some control problems in fluid mechanics.
\newblock {\em Theoretical and Computational Fluid Dynamics - THEOR COMPUT
  FLUID DYNAMICS}, 1:303--325, 11 1990.

\bibitem{article}
Zdzislaw Brzezniak, Gaurav Dhariwal, and Mauro Mariani.
\newblock 2d constrained navier-stokes equations.
\newblock {\em Journal of Differential Equations}, 264, 06 2016.

\bibitem{Caffarelli2008NonlocalHF}
Luis Caffarelli and Fanghua Lin.
\newblock Nonlocal heat flows preserving the l2 energy.
\newblock {\em Discrete and Continuous Dynamical Systems}, 23:49--64, 2008.

\bibitem{art}
Emanuele Caglioti, M.~Pulvirenti, and Frédéric Rousset.
\newblock On a constrained 2-d navier-stokes equation.
\newblock {\em Communications in Mathematical Physics}, 290:651--677, 09 2009.

\bibitem{cartan1983differential}
H.~Cartan.
\newblock {\em Differential Calculus}.
\newblock International studies in mathematics. Hermann, 1983.

\bibitem{Hinze2002OptimalAI}
Michael Hinze.
\newblock Optimal and instantaneous control of the instationary navier-stokes
  equations.
\newblock 2002.

\bibitem{Hinze2001SecondOM}
Michael Hinze and Karl Kunisch.
\newblock Second order methods for optimal control of time-dependent fluid
  flow.
\newblock {\em SIAM J. Control. Optim.}, 40:925--946, 2001.

\bibitem{Ladyzhenskaia1959SolutionT}
O.~A. Ladyzhenskaia.
\newblock Solution “in the large” of the nonstationary boundary value
  problem for the navier‐stokes system with two space variables.
\newblock {\em Communications on Pure and Applied Mathematics}, 12:427--433,
  1959.

\bibitem{artic}
Akash Panda and Utpal Manna.
\newblock Well-posedness and large deviations for 2d stochastic constrained
  navier-stokes equations driven by lévy noise in the marcus canonical form.
\newblock {\em Journal of Differential Equations}, 302:64--138, 09 2021.

\bibitem{rybka_2006}
Piotr Rybka.
\newblock Convergence of a heat flow on a hilbert manifold.
\newblock {\em Proceedings of the Royal Society of Edinburgh Section A:
  Mathematics}, 136(4):851–862, 2006.

\bibitem{temam1977navier}
R.~Temam.
\newblock {\em Navier-Stokes Equations: Theory and Numerical Analysis}.
\newblock Studies in mathematics and its applications. North-Holland Publishing
  Company, 1977.

\bibitem{tröltzsch2010optimal}
F.~Tr{\"o}ltzsch.
\newblock {\em Optimal Control of Partial Differential Equations: Theory,
  Methods, and Applications}.
\newblock Graduate Studies in Mathematics. 2010.

\bibitem{Trltzsch2006SecondorderSO}
Fredi Tr{\"o}ltzsch and Daniel Wachsmuth.
\newblock Second-order sufficient optimality conditions for the optimal control
  of navier-stokes equations.
\newblock {\em ESAIM: Control, Optimisation and Calculus of Variations},
  12:93--119, 2006.

\bibitem{phdthesis}
Daniel Wachsmuth.
\newblock {\em Optimal control of the unsteady Navier-Stokes equations}.
\newblock PhD thesis, 01 2006.

\end{thebibliography}
\end{document}